\crefname{lemma}{lemma}{lemmas}
\crefname{clause}{clause}{clauses}
\crefname{claim}{claim}{claims}
\theoremstyle{plain}
\newtheorem{theorem}{Theorem}[section]
\newtheorem{theoremIntro}{Theorem}
\newtheorem*{theorem*}{Theorem}
\newtheorem*{conjecture*}{Conjecture}
\newtheorem{lemma}[theorem]{Lemma}
\newtheorem{claim}[theorem]{Claim}
\numberwithin{claim2}{theorem}
\numberwithin{claim3}{claim2}
\newtheorem{corollary}[theorem]{Corollary}
\newtheorem*{corollary*}{Corollary}
\theoremstyle{definition}
\newtheorem{definition}[theorem]{Definition}
\theoremstyle{remark}
\numberwithin{equation}{theorem}
\newcommand{\PP}{{\mathbb{P}}}
\newcommand{\ga}{\alpha}
\newcommand{\gb}{\beta}
\newcommand{\gc}{\chi}
\newcommand{\gga}{\gamma}
\newcommand{\gj}{\varphi}
\newcommand{\gk}{\kappa}
\newcommand{\gl}{\lambda}
\newcommand{\gm}{\mu}
\newcommand{\gn}{\nu}
\newcommand{\gp}{\pi}
\newcommand{\gr}{\rho}
\newcommand{\gs}{\sigma}
\newcommand{\gt}{\tau}
\newcommand{\gw}{\omega}
\newcommand{\gx}{\xi}
\newcommand{\VN}[1]{\Check{#1}}
\newcommand{\GN}[1]{\Dot{#1}}
\newcommand{\satisfies}{\vDash}
\newcommand{\forces}{\mathrel\Vdash}
\DeclareMathOperator{\Col}{Col}
\newcommand{\union}{\cup}
\newcommand{\bigunion}{\bigcup}
\newcommand{\intersect}{\cap}
\newcommand{\bigintersect}{\bigcap}
\newcommand{\append}{\mathbin{{}^\frown}}
\newcommand{\subelem}{\prec}
\newcommand{\restricted}{\mathord{\restriction}}
\newcommand{\power}[1]{\lvert#1\rvert}
\newcommand{\Pset}{{\mathcal{P}}}
\newcommand{\ordered}[1]{\ensuremath{\langle #1 \rangle}}
\newcommand{\set}[1]{\ensuremath{\{ #1 \}}}
\newcommand{\setof}[2]{\ensuremath{\{ #1 \mid #2 \}}}
\newcommand{\ordof}[2]{\ensuremath{\ordered{ #1 \mid #2 }}}
\newcommand{\formula}[1]{\text{``} #1\text{''}}
\newcommand{\On}{\ensuremath{\text{On}}}
\newcommand{\leftexp}[2]{{\vphantom{#2}}^{#1}{#2}}
\newcommand{\func}{\mathrel{:}}
\DeclareMathOperator{\Chn}{C}	
\DeclareMathOperator{\bChn}{\mathbb{C}}	
\DeclareMathOperator{\dom}{dom}		
\DeclareMathOperator{\ran}{ran}
\DeclareMathOperator{\id}{id}
\DeclareMathOperator{\cf}{cf}
\DeclareMathOperator{\tc}{tc}
\DeclareMathOperator{\mo}{o}		
\DeclareMathOperator{\crit}{crit}	
\DeclareMathOperator{\Ult}{Ult}
\newcommand{\PE}{\mathbb{P}_{E}}
\g@addto@macro\bfseries{\boldmath}
\title{Some applications of Supercompact Extender Based Forcings to $\HOD$.}
\author{Moti Gitik}
\thanks{The work of the first author was
				 partially supported by ISF grant No.58/14.}
\address{
School of Mathematical Sciences
\\
Raymond and Beverly Sackler Faculty of Exact Sciences
\\
Tel Aviv University
\\
Ramat Aviv 69978
\\
Israel
}
\email{gitik@post.tau.ac.il}
\author{Carmi Merimovich}
\address{
Computer Science School
\\
Tel Aviv Academic College
\\
2 Rabenum Yeroham St.
\\
Tel Aviv
\\
Israel
}
\email{carmi@cs.mta.ac.il}
\date{July 28, 2016}
\subjclass[2010]{Primary 03E35, 03E55}
\keywords{large cardinals, extender based forcing,
          HOD, Easton iteration}
\newcommand{\OD}{\text{OD}}
\newcommand{\HOD}{\text{HOD}}
\newcommand{\lto}{\mathord{<}}
\begin{document}
\begin{abstract}
 Supercompact extender based forcings are used to construct  models with
   $\HOD$ cardinal structure different from those of $V$.
 In particular, a model with all  regular uncountable cardinals   measurable in
 		 $\HOD$ is constructed.
\end{abstract}
\maketitle
\section{Introduction}
In \cite{CummingsFriedmanMagidorSinapovaRinotPreprint}
	 the following result was proved:
\begin{theorem*}
 Suppose $\kappa < \lambda$ are cardinals such that
 			 $\cf(\kappa) = \omega$,
 			 $\lambda$ is inaccessible,
 			  and $\kappa$ is a limit of
				$\lambda$-supercompact cardinals.
Then there is a forcing poset $Q$ that adds no bounded subsets
	of $\kappa$, and  if $G$ is $Q$-generic then:
\begin{itemize}
\item $\lambda = (\kappa^+)^{V[G]}$.
\item Every cardinal $\geq \lambda$ is preserved in $V[G]$.
\item For every $x\subseteq \kappa$ with $x\in V[G]$,
				$(\kappa^+)^{\HOD_{\set{x}}} < \lambda$.
\end{itemize}
\end{theorem*}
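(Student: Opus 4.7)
The plan is to take $Q$ to be a supercompact extender based Prikry-type forcing at $\kappa$ interleaved with Levy collapses, of the general shape featured in the paper's title. Fix an increasing sequence $\langle \kappa_n : n < \omega \rangle$ of $\lambda$-supercompact cardinals with $\sup_n \kappa_n = \kappa$, and for each $n$ a normal fine ultrafilter $U_n$ on $\Pset_{\kappa_n}(\lambda)$ witnessing $\lambda$-supercompactness. After a preparatory iteration arranging GCH in the interval $[\kappa,\lambda]$ and Laver-style indestructibility of the $\kappa_n$, conditions of $Q$ will take the form of a finite stem together with a measure-one upper part: the stem fixes $x_0, \ldots, x_{k-1}$ with $x_i \in \Pset_{\kappa_i}(\lambda)$ and Levy-collapse pieces $p_i \in \Col(|x_i|^{++}, {<}\kappa_{i+1})$, while the upper part imposes $U_n$-measure-one constraints on the future $x_n$ and fat commitments on the $p_n$ for $n \geq k$.

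Verifying the first two bullets is then fairly standard. The diagonal Prikry property --- a consequence of the $\lambda$-supercompactness of the $\kappa_n$ together with the closure of the tails of the collapses --- gives that $Q$ adds no bounded subsets of $\kappa$. The union $\bigcup_n x_n = \lambda$ together with the interval collapses forces every ordinal below $\lambda$ to have cardinality at most $\kappa$ in $V[G]$, so $(\kappa^+)^{V[G]} = \lambda$. Cardinals above $\lambda$ are preserved by a standard $\lambda^+$-cc computation using $|Q| = \lambda$ (under GCH and inaccessibility of $\lambda$).

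The substantive content, and the main obstacle, is the $\HOD_{\set{x}}$ clause. The strategy I would pursue is: given $x \subseteq \kappa$ in $V[G]$, exhibit an intermediate extension $V[G_x] \subseteq V[G]$ with $x \in V[G_x]$ such that (a) $\lambda$ remains a cardinal in $V[G_x]$, so in particular $(\kappa^+)^{V[G_x]} < \lambda$, and (b) the quotient $Q / G_x$ is weakly homogeneous as interpreted in $V[G_x]$. A standard homogeneous-forcing argument then yields
\[
 \HOD_{\set{x}}^{V[G]} \subseteq V[G_x], \qquad \text{whence} \qquad (\kappa^+)^{\HOD_{\set{x}}} \leq (\kappa^+)^{V[G_x]} < \lambda.
\]

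The hard part is engineering $Q$ so that every such $x$ genuinely admits this factorization. Because $|x| \leq \kappa$ but the full $Q$ collapses all of $\lambda$ down to $\kappa^+$, one needs each $x \subseteq \kappa$ in $V[G]$ to be definable from a ``thin slice'' of the generic that uses strictly less than $\lambda$ worth of collapse data, while the remaining $\lambda$-many collapses form a weakly homogeneous tail over that slice. Balancing this for every $x$ simultaneously is precisely what the supercompact extender structure is designed to afford: the extenders let each Prikry point carry both collapse and measure information, and the system is arranged so that whatever bounded-in-$\lambda$ snapshot of the generic determines $x$ can be factored out cleanly, leaving a homogeneous remainder.
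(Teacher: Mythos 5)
Your high-level strategy for the third bullet --- capturing $\HOD_{\set{x}}^{V[G]}$ inside an intermediate extension $V[G_x]$ obtained by a forcing of size $<\lambda$, with a homogeneous quotient over it --- is exactly the shape of the argument the paper uses (there the intermediate model is $V[s_X''G]$ for a projection of the extender based forcing to a set $X \subseteq \dom E$ of size $<\lambda$, and the homogeneity input is \cref{HomogeneousExtenderBased} combined with \cref{HodIsInSmallExtension}). However, the poset you describe does not have the cardinal structure the theorem demands. The interleaved collapses $\Col(\power{x_i}^{++}, \lto\kappa_{i+1})$ act entirely below $\kappa$ (since $\power{x_i} < \kappa_i < \kappa_{i+1} < \kappa$), so they add bounded subsets of $\kappa$, contradicting the first clause; this Gitik--Sharon/Sinapova template is built for turning $\kappa$ into $\aleph_\omega$, which is not what is wanted here. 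Worse, if $x_n \in \Pset_{\kappa_n}(\lambda)$ and $\bigunion_n x_n = \lambda$, then in $V[G]$ the ordinal $\lambda$ is a countable union of sets of size $<\kappa$, so $\lambda$ itself acquires cardinality $\kappa$ and $(\kappa^+)^{V[G]}$ is at least $(\lambda^+)^V$ rather than $\lambda$. The collapsing needed for $\lambda = (\kappa^+)^{V[G]}$ must take place strictly inside the interval $(\kappa,\lambda)$ while $\lambda$ survives as a cardinal; in the paper this is delivered in one stroke by Merimovich's supercompact extender based Prikry forcing, with no interleaved collapses at all.

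Beyond the design problem, the step you yourself flag as ``the hard part'' --- producing, for \emph{every} $x \subseteq \kappa$ in $V[G]$, a small subextension $V[G_x]$ containing $x$ over which the remainder is weakly homogeneous --- is the actual content of the theorem, and your sketch does not supply a mechanism for it. Your closing appeal to ``the supercompact extender structure'' is unavailable: the forcing you defined carries only single normal measures $U_n$ on $\Pset_{\kappa_n}(\lambda)$, not an extender, and nothing in the construction identifies the ``thin slice'' of the generic that decides a given name $\GN{x}$. In the paper this is precisely where the work lies: one takes a $\kappa$-internally approachable $N \subelem H_\gc$ with $\GN{a} \in N$ forming a good pair with a generic condition, proves that forcing statements involving $\GN{a}$ and ordinals are already decided by the projection $s_N(p)$ of a condition to $N$ (because the isomorphisms of \cref{HomogeneousExtenderBased} can be arranged to fix $\GN{a}$), and only then concludes $\HOD_{\set{a}}^{V[G*H]} \subseteq V[s_X''G]$ with $X = N \intersect \dom E$ of size $<\lambda$. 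Without an analogue of that argument for your poset, the third bullet remains unproved.
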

The supercompact extender based Prikry forcing, developed by the second author
	 in \cite{Merimovich2011c}, is applied to reduce largely the
 	 initial assumptions of this theorem and to give a simpler proof.
Namely, we show the following:
\begin{theoremIntro} \label{Thm1}
Suppose $\kappa$ is a $\lto\lambda$-supercompact 
		cardinal\footnote{
			A cardinal $\gk$ is said to be $\lto\gl$-supercompact if there is
			an elementary embedding $j \func V \to M$ such that
				$M$ is transitive,
				$\crit j = \gk$,
		$j(\gk) \geq \gl$, and
			$M \supseteq \leftexp{<\gl}{M}$.
		},
			  and  $\lambda$ is an inaccessible cardinal above $\kappa$.
Then there is a forcing poset $Q$ that
		 adds no bounded subsets of $\kappa$,
		 and if $G$ is $Q$-generic then:
\begin{itemize}
\item 
			$\lambda = (\kappa^+)^{V[G]}$.
\item
			 Every cardinal $\geq \lambda$ is preserved in $V[G]$.
\item
			 For every $x\subseteq \kappa$ with $x\in V[G],
			 		 (\kappa^+)^{\HOD_{\set{x}}} < \lambda$.
\item
			 $\cf^{\HOD_{\set{x}}} \gk = \gw$
\end{itemize}
\end{theoremIntro}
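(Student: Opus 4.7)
The plan is to let $Q$ be the supercompact extender based Prikry forcing $\PE$ from the second author's paper cited above, applied to a $(\gk,\gl)$-extender $E$ derived from an elementary embedding $j\func V\to M$ witnessing the $\lto\gl$-supercompactness of $\gk$. Standard consequences of the construction of $\PE$ yield most of the numerical clauses for free: the Prikry property implies that $\PE$ adds no bounded subsets of $\gk$; the $\gl^+$-chain condition implies every cardinal $\geq\gl$ is preserved; and the fact that $\PE$ adds Prikry sequences for the whole system of projections of $E$ collapses every cardinal in $(\gk,\gl)$ down to $\gk$, so that combined with the c.c.\ one obtains $\gl = (\gk^+)^{V[G]}$.

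The real content of the theorem is the $\HOD$ clauses. For these I would exploit that $\PE$ decomposes along extender coordinates: for each inaccessible $\gm$ with $\gk<\gm<\gl$ there is a sub-extender $E\restricted\gm$ whose forcing $\mathbb{P}_{E\restricted\gm}$ is a complete subforcing of $\PE$, and the quotient $\PE/\mathbb{P}_{E\restricted\gm}$ admits a rich group of automorphisms permuting the remaining coordinates. Writing $G_\gm$ for the induced $\mathbb{P}_{E\restricted\gm}$-generic, the key step is a capture claim: for every $x\subseteq\gk$ in $V[G]$ there is some $\gm<\gl$ with $x\in V[G_\gm]$. I would prove this by using the Prikry property to replace a $\PE$-name for $x$ by one decided by pure (direct extension) conditions, and then using the closure of the direct extension order to compress the support of that name into $\lto\gl$-many extender coordinates. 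Homogeneity of the tail quotient then yields $\HOD_{\{x\}}^{V[G]}\subseteq V[G_\gm]$, and since $\gm$ is collapsed to $\gk$ inside $V[G_\gm]$ we obtain $(\gk^+)^{\HOD_{\{x\}}}\leq (\gm^+)^{V[G_\gm]} < \gl$, the last inequality using that $\gl$ is inaccessible.

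For the cofinality clause, I would exhibit a canonical cofinal $\gw$-sequence in $\gk$ added by $\PE$ that is ordinal definable from the generic, for instance the sequence of critical points coming from the $\gk$-coordinate of the extender. This sequence lies in $\HOD \subseteq \HOD_{\{x\}}$, and combined with $\HOD_{\{x\}}\subseteq V[G]$ and $\cf^{V[G]}\gk=\gw$ this forces $\cf^{\HOD_{\{x\}}}\gk = \gw$.

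The principal obstacle is the capture-and-homogeneity step. The supercompact extender based forcing is substantially more delicate than the classical extender based Prikry forcing, because the measure-one objects are $\gl$-supercompactness-type measures and the permutation automorphisms must act coherently on these. Verifying that an arbitrary $x\subseteq \gk$ is genuinely captured by a bounded piece of the extender, and simultaneously that the tail quotient is homogeneous enough to be invisible to $\HOD_{\{x\}}$, is where the technical heart of the argument lies; the Prikry property together with the closure of the direct extension order is the main tool.
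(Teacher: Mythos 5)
There is a genuine gap, and it is located exactly where your proposal is most confident: the cofinality clause. You take $Q=\PE$ and argue that the normal Prikry sequence is ``ordinal definable from the generic'' and hence lies in $\HOD$. But being definable from $G$ does not put a set into $\HOD^{V[G]}$, since $G$ itself is not ordinal definable; and the very homogeneity you invoke for the other clauses works against you here. The forcing $\PE$ is cone homogeneous (this is the paper's Claim \ref{HomogeneousExtenderBased}), so if it is ordinal definable then $\HOD^{V[G]}\subseteq V$, where $\gk$ is regular. Taking $x=\emptyset$ in the theorem, $\HOD_{\set{x}}=\HOD$, and your $Q$ gives $\cf^{\HOD}\gk>\gw$ rather than $\gw$. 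There is a structural tension you have not resolved: you need enough homogeneity to push $\HOD_{\set{x}}$ down into a small submodel, but you also need to deliberately break that homogeneity just enough to force the Prikry sequence \emph{into} $\HOD$.

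The paper's essential extra idea, absent from your proposal, is to take $Q=\PP=\PE * \GN{\bChn}_{\gc,\GN{f}_G(\gk)}$, where the second factor is an Easton product of Cohen forcings coding the normal Prikry sequence $f_G(\gk)$ into the continuum function (which is ordinal definable in any model). The combined forcing is then homogeneous only \emph{relative to} the normal-measure projection $s$: whenever $s(p_0)$ and $s(p_1)$ are compatible the cones below suitable extensions are isomorphic (Claim \ref{ProductIso}), which yields $f_G(\gk)\in\HOD^{V[G*H]}\subseteq V[s''G]$ simultaneously. Your capture step for $(\gk^+)^{\HOD_{\set{x}}}<\gl$ is in the right spirit, though the paper implements it not via complete subforcings $\mathbb{P}_{E\restriction\gm}$ (whose complete embedding into $\PE$ you would still have to justify) but via good pairs $\ordered{N,f}$ with $N\prec H_\gc$ $\gk$-internally approachable, an $\ordered{N,\PE}$-generic condition, and the projection $s_N$, giving $\HOD^{V[G*H]}_{\set{a}}\subseteq V[s_X''G]$ with $\power{X}<\gl$. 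You would also need some preparation of the ground model (the paper works over $L[A]$ and iterates codings) to make the relevant parameters ordinal definable in the first place.
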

Actually, assuming the measurability (or supercompactness) of $\lambda$ in $V$,
			 we obtain that $(\kappa^+)^{V[G]}$ is measurable (or supercompact)
			 			 in $\HOD_{\set{x}}$.

In \cite{CummingsFriedmanGolshani},
	 a model with the property $(\alpha^+)^{\HOD}<\alpha^+$, for every infinite cardinal $\alpha$ was constructed.
We extend this result, 
		using the supercompact extender based Magidor forcing	 of the second author
				 \cite{MerimovichSupercompactExtender},
		 and show the following:
\begingroup
\makeatletter
\apptocmd{\thetheoremIntro}{\unless\ifx\protect\@unexpandable@protect\protect\footnote{
	This result was presented at the
	Arctic Set Theory Worshop 2 in Kilpisj\"{a}rvi, Finland, February 2015.
	}\fi}{}{}
\makeatother
\begin{theoremIntro}
	 \label{TheBigPicture}
Assume there is a Mitchell increasing sequence of extenders
	$\ordof{E_\gx}{\gx < \gl}$ such that
		$\gl$ is measurable, and
	for each $\gx < \gl$,
		$\crit(j_\gx) = \gk$,
			$M_\gx \supseteq \leftexp{<\gl}{M}_\gx$, and
				$M_\gx \supseteq V_{\gl+2}$, where
					$j_\gx \func V \to \Ult(V, E_\gx) \simeq M_\gx$
					is the natural embedding.
Then there is a model of ZFC where all regular uncountable cardinals are
	measurable in $\HOD$.
\end{theoremIntro}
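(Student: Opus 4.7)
The plan is to apply the supercompact extender-based Magidor forcing of \cite{MerimovichSupercompactExtender}, parametrised by (suitable tails of) the Mitchell-increasing sequence $\ordof{E_\gx}{\gx < \gl}$ and possibly set inside an Easton-support iteration or preparation, to produce a model in which the Magidor-style collapses along the sequence make every regular uncountable cardinal the successor of some stage's critical point and thereby measurable in $\HOD$. The move from Prikry to Magidor is the key: \Cref{Thm1} uses Prikry to make one $\gk$ of cofinality $\gw$ and its successor measurable in $\HOD_{\set{x}}$; the Magidor analogue can target prescribed uncountable cofinalities and is driven by an entire Mitchell-increasing sequence rather than a single measure.

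First I would run the forcing and verify stagewise the analogue of \Cref{Thm1}: at each active stage $\gx$ no bounded subsets are added at the critical point $\gk_\gx$, cardinals above the next inaccessible are preserved, the next inaccessible becomes $\gk_\gx^+$ in the extension, and the extender $E_\gx$ projects onto a normal measure on this new $\gk_\gx^+$. The closure conditions $M_\gx \supseteq \leftexp{<\gl}{M}_\gx$ and $M_\gx \supseteq V_{\gl+2}$ are used precisely to perform the standard extender-based lifting and to extract the derived measure, while the Mitchell-increasing condition supplies the projections between consecutive extenders that make the Magidor cell structure cohere across stages. A separate density/coverage check will show that every regular uncountable cardinal of the final model indeed arises as such a $(\gk_\gx^+)^{V[G]}$.

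The principal obstacle is upgrading ``measurable in $\HOD_{\set{x}}$'', which is what \Cref{Thm1} yields at each stage, to ``measurable in $\HOD$'' without any parameter. The strategy is to arrange that the entire setup is ordinal-definable in $V[G]$: the Mitchell-increasing sequence is canonically enumerated by Mitchell rank, hence ordinal-definable from $\gl$; the forcing built from it is therefore ordinal-definable; and the generic restriction at each stage can be decoded by a Vopenka-style argument from the Magidor ordinals that survive in the extension. Consequently the derived normal measure on each $(\gk_\gx^+)^{V[G]}$ is ordinal-definable in $V[G]$ without parameters, and thus lies in $\HOD$. The chief technical difficulty is to verify this ordinal-definability coherently across the whole construction while maintaining the stagewise Magidor structure, and it is here that the measurability of $\gl$ together with the full Mitchell-increasing data is expected to do the heavy lifting.
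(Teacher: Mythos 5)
There is a genuine gap at the heart of your plan: the mechanism you propose for getting the measures into parameter-free $\HOD$ does not work, and it is precisely here that the paper does something quite different. You claim that ``the derived normal measure on each $(\gk_\gx^+)^{V[G]}$ is ordinal-definable in $V[G]$ without parameters'' because the forcing is ordinal definable and the generic ``can be decoded by a Vopenka-style argument.'' But a measure derived from a lifted embedding needs the generic filter as a parameter, and the generic is not ordinal definable; Vopenka-style arguments place $V[G]$ inside a $\HOD$-definable forcing extension of $\HOD$, they do not put objects defined from $G$ into $\HOD$ itself. The paper never lifts embeddings and never derives measures from the generic. Instead it works with a single extender-based Radin forcing at $\gk$ whose generic club $\ordof{\gk_\ga}{\ga<\gk}$ produces cardinals $\gm_\ga$ (equal to $\gk_\ga$ or $\gk_\ga^+$) that are \emph{already measurable in $V$}, with $V$-measures $U_\ga$. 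It then squeezes $\HOD$ from both sides: from below, an Easton iteration of coding forcings $\Chn_{\gc_\gb,\mathfrak{A}_\gb}$ writes every subset of $(\gm_\gb^{++})_V$ into the continuum function, forcing $U_\ga \in \HOD$; from above, cone-homogeneity of all the forcings modulo the normal-measure projection $s$ gives $\HOD \subseteq V[s''G]$, a plain Radin extension in which $U_\ga$ generates an ultrafilter, so the filter $U_\ga$ generates inside $\HOD$ is in fact an ultrafilter there. Your proposal contains no analogue of either half of this sandwich.

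Two further omissions would sink the ``coverage'' step even if the measures were handled. First, between consecutive club points $\gk_\ga < \gk_{\ga+1}$ there remain many regular cardinals ($\gk_\ga^{++}$, inaccessibles in the interval, etc.) that are not of the form you describe; the paper must interleave an Easton iteration of collapses $\Col(\gm_\ga,\lto\gk_{\ga+1})$ (and finally $\Col(\gw,\lto\gk_0)$, passing to $V_\gk$ of the extension) so that the regular uncountable cardinals are exactly the $\gm_\ga$. A ``density/coverage check'' on the Magidor forcing alone will not deliver this. Second, your stagewise picture with distinct critical points $\gk_\gx$ at ``active stages'' does not match the hypothesis, in which all the extenders $E_\gx$ share the single critical point $\gk$ and are used together as one Mitchell-increasing sequence in one Radin-style forcing; the hypotheses $M_\gx \supseteq \leftexp{<\gl}{M}_\gx$ and $M_\gx \supseteq V_{\gl+2}$ are used to make $\gl=\gk^+$ in the extension and to reflect the measurability of $\gl$ down to the $\gk_\ga^+$ at limit $\ga$, not to perform liftings along an iteration.
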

\endgroup
This may be of some interest due to the following
	 result of H. Woodin \cite{Woodin2010}:
\begin{theorem*}	[The HOD dichotomy theorem]
 \emph{Suppose} $\delta$ is an extendible cardinal.
Then exactly one of the following holds:
\begin{enumerate}
\item
	For every singular cardinal $\gamma>\delta$, $\gamma$ is singular in $\HOD$ 				and $\gamma^+=(\gamma^+)^{\HOD}$
\item
		Every regular cardinal greater than $\delta$ is measurable in $\HOD$.
\end{enumerate}
\end{theorem*}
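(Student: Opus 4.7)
The plan is to prove the dichotomy via the intermediate notion of a cardinal being $\gw$-strongly measurable in $\HOD$: a regular cardinal $\gk$ has this property if there exists $\gl<\gk$ with $(2^\gl)^{\HOD}<\gk$ such that, computed inside $\HOD$, the set $S^\gk_\gw=\setof{\ga<\gk}{\cf(\ga)=\gw}$ admits no partition into $\gl$-many pairwise disjoint stationary pieces. One then checks that such a $\gk$ is measurable in $\HOD$: the absence of such a partition forces $\Pset(S^\gk_\gw)/\text{NS}$, computed in $\HOD$, to have no antichain of size $\gl$, and this combined with $(2^\gl)^{\HOD}<\gk$ allows extraction of a $\gk$-complete non-principal $\HOD$-ultrafilter on $\gk$. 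It therefore suffices to show that if alternative~(1) fails then every regular $\gk>\gd$ is $\gw$-strongly measurable in $\HOD$.

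First I would establish the reflection step. Call $\gga$ \emph{$\HOD$-covered} if $\gga$ is singular in both $V$ and $\HOD$ with $(\gga^+)^{\HOD}=\gga^+$; alternative~(1) is precisely the assertion that every singular $\gga>\gd$ is $\HOD$-covered. Suppose some singular $\gga_0>\gd$ fails to be $\HOD$-covered and pick $\gh$ far above $\gga_0$. Using extendibility of $\gd$, choose an elementary $j\func V_\gh\to V_{j(\gh)}$ with $\crit j=\gd$ and $j(\gd)>\gh$. Since $\HOD$ is $\Sigma_2$-definable and the failure at $\gga_0$ is witnessed by bounded data in $V_\gh$, elementarity propagates the failure to $j(\gga_0)\in(\gh,j(\gh))$; iterating along a chain of such embeddings yields $\HOD$-covering failures cofinally above $\gd$, and, applying the same argument to successors and to cofinalities, also $(\gk^+)^{\HOD}<\gk^+$ or $\cf^{\HOD}(\gk)<\gk$ at every regular $\gk>\gd$.

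Second, I would convert a covering failure at a regular $\gk>\gd$ into $\gw$-strong measurability in $\HOD$. If $(\gk^+)^{\HOD}<\gk^+$ and there were, in $\HOD$, a partition $\ordof{S_\ga}{\ga<\gl}$ of $S^\gk_\gw$ into $\gl<\gk$ stationary pieces with $(2^\gl)^{\HOD}<\gk$, then Shelah's club-guessing applied inside $\HOD$ would combine these stationary sets with a club-guessing sequence on $S^\gk_\gw$ to manufacture, inside $\HOD$, a scale of length $\gk^+$ on a product of regular cardinals below $\gk$; but such a scale certifies $(\gk^+)^{\HOD}=\gk^+$, a contradiction. Hence $\gk$ is $\gw$-strongly measurable, and in particular measurable, in $\HOD$. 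The two alternatives are manifestly incompatible: a measurable cardinal is a limit cardinal in its model, while (1) forces the successor of any singular $\gga>\gd$ to be a successor in $\HOD$.

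The main obstacle is the second step: extracting the PCF/club-guessing contradiction inside $\HOD$ from the raw assumption $(\gk^+)^{\HOD}<\gk^+$. This requires verifying that the club-guessing sequences, scales, and stationary sets furnished by Shelah's machinery survive the passage to $\HOD$ — essentially, that $\HOD$ is correct about stationarity and about cofinalities on a sufficiently rich set of ordinals cofinally below $\gk$. The reflection step, by contrast, is a reasonably standard deployment of extendibility, delicate only in its handling of $\HOD^{V_\gh}$ versus $\HOD$ at the chosen reflection level.
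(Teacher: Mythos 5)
First, a point of reference: the paper does not prove this statement at all --- it is quoted from Woodin \cite{Woodin2010} purely as motivation for the main theorem --- so your proposal can only be measured against Woodin's own argument. Your opening move, routing the dichotomy through $\gw$-strong measurability in $\HOD$ and proving that an $\gw$-strongly measurable cardinal is measurable in $\HOD$ via the splitting argument on stationary subsets of $S^\gk_\gw$, is indeed the first lemma of that argument, up to one detail you have stated backwards: in the definition the partition must lie in $\HOD$, but stationarity and the cofinality $\gw$ must be computed in $V$. Computing stationarity ``inside $\HOD$'', as you write, would allow $\HOD$ to be wrong about which sets are stationary, and the extraction of a $\gk$-complete $\HOD$-ultrafilter from the club filter would fail. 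Your remark that the two alternatives are mutually exclusive is correct.

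The genuine gap is in your second and third steps, which run the reflection in the wrong direction and rest on a step that does not work. Woodin's proof is contrapositive from the other side: one shows that if even one regular $\gk \geq \gd$ is \emph{not} $\gw$-strongly measurable in $\HOD$ --- equivalently, if $\HOD$ contains a partition of $S^\gk_\gw$ into $\gk$ many $V$-stationary pieces --- then, feeding such partitions into Solovay's argument applied to the extendibility embeddings $j \func V_\gh \to V_{j(\gh)}$, one computes $\sup j''\gga$ correctly and derives the $\gd$-covering and $\gd$-approximation properties for $\HOD$, whence $\HOD$ computes all singular cardinals above $\gd$ and their successors correctly, i.e.\ clause (1). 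You instead try to convert a single covering failure directly into $\gw$-strong measurability of every regular $\gk$, and the mechanism you propose --- that Shelah's club-guessing ``manufactures a scale of length $\gk^+$ on a product of regular cardinals below $\gk$'', certifying $(\gk^+)^{\HOD} = \gk^+$ --- is not a real argument: scales of length $\gga^+$ live at \emph{singular} $\gga$, your $\gk$ is regular, and no PCF-theoretic device is known (or used by Woodin) to pass from a stationary partition in $\HOD$ to correctness of $\gk^+$ at a regular $\gk$. Finally, the $\HOD^{V_\gh}$ versus $\HOD \cap V_\gh$ issue that you flag at the end is not a side remark but precisely the place where the proof needs the $\Sigma_2$-correctness of a suitably chosen $\gh$; leaving it unresolved leaves the reflection step unproved. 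As written, the proposal establishes only the first lemma; the heart of the dichotomy is missing.
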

However, we do not have even inaccessibles in the model of \cref{TheBigPicture}.
It is possible to modify the construction in order to have 
	measurable cardinals (and bit more) in the model.
We do not know how to get supercompacts and
	 it is very
		unlikely the method used  will allow 
				model with supercompacts.

The structure of this work is as follows.
In \cref{sec:HODthings} we give definitions and claims about $\HOD$
	and homogeneous forcing notions which are well know.
In \cref{sec:CofinalityOmega} we prove
	\cref{Thm1}.
In \cref{sec:GlobalResult} we prove \cref{TheBigPicture}.
	
We assume knowledge of large cardinals and forcing.
In particular this work depends
	on the supercompact extender based Prikry-Magidor-Radin forcing.
\section{HOD things} \label[section]{sec:HODthings}
\begin{definition}
Let $M$ be a class.
The class $\OD_M$ contains the sets definable using ordinals and sets from $M$,
i.e., $A \in \OD_M$ iff there is a formula $\gj(x,x_1, \dotsc, x_k, y_1, \dotsc, y_m)$,
	ordinals $\gb, \ga_1, \dotsc, \ga_k \in \On$, and sets $a_1, \dotsc, a_m \in M$, such that
	$A = \setof{a \in V_\gb}{V_\gb \satisfies \gj(a, \ga_1, \dotsc, \ga_k, a_1, \dotsc, a_m)}$.

	The class $\HOD_M$	 contains sets which are hereditarily in $\OD_M$, i.e.,
		$A \in \HOD_M$ iff $\tc(\set{A}) \subseteq \HOD_M$.
		
We write $\OD$ and $\HOD$ for $\OD_\emptyset$ and $\HOD_\emptyset$, respectively.
\end{definition}
Note,  if $A \in \OD$ is a set of ordinals then $A \in \HOD$.

We will work in $\HOD$ of generic extensions, hence
	the relation between $V[G]$ and $\HOD^{V[G]}$,
		where $V[G]$ is a generic extension,
		 will be our main machinery.

Our main tool will be forcing notions which are
	homogeneous in some sense.
	A forcing notion $P$ is said to be cone homogeneous if for each
	pair of conditions $p_0, p_1 \in P$ there is a pair of conditions
			$p_0^*, p_1^* \in P$ such that
					$p_0^* \leq p_0$, $p_1^* \leq p_1$, and
					$P/p_0^* \simeq P/p_1^*$.
					
A forcing notion $P$ is said to be weakly homogeneous if for each
	pair of conditions $p_0, p_1 \in P$ there is an automorphism
		$\gp \func P \to P$ so that $\gp(p_0)$ and $p_1$ are compatible.
It is evident a weakly homogeneous forcing notion is cone homogeneous.

An automorphism $\gp \func P \to P$ induces an automorphism
	on $P$-terms by setting recursively
		$\gp(\ordered{\GN{\gt},p}) = \ordered{\gp(\GN{\gt}), \gp(p)}$.
		
Note ground model terms are fixed by automorphisms,
	i.e., $\gp(\VN{x}) = \VN{x}$,
	in particular for each ordinal $\ga$,
			$\gp(\VN{\ga}) = \VN{\ga}$.

An essential fact about a cone homogeneous forcing notion $P$ is that for
	each formula $\gj$, either
		$\forces_P \gj(\ga_1, \dotsc, \ga_l)$ or
		$\forces_P \lnot\gj(\ga_1, \dotsc, \ga_l)$.
If in addition the forcing $P$ is ordinal definable then
	we get $\HOD^{V[G]} \subseteq V$,
		where $G$ is $P$-generic.

In \cite{DobrinenFriedman2008} it was shown that an arbitrary iteration
	of weakly (cone) homogeneous forcing notions is
		weakly (cone) homogeneous under the  very mild assumption
			that the iterand is fixed by automorphisms.
For the sake of completeness we show here a special case of this
	theorem, which is enough for our purpose.
\begin{theorem}[Special case of
	Dobrinen-Friedman \cite{DobrinenFriedman2008}]
		\label{Dobrinen}
Assume $\ordof{P_\ga, \GN{Q}_\gb}{\ga \leq \gk,\ \gb < \gk}$
	is a backward Easton iteration such that
		for each $\gb < \gk$,
		$\forces_{P_\gb} \formula{ \GN{Q}_\gb \text{ is cone homogeneous}}$
		and for each $p_0, p_1 \in P_\gb$ and automorphism
				$\gp\func P_\gb/p_0 \to P_\gb/p_1$,
					we have
					$\forces_{P_\gb/p_0} \formula{\gp^{-1}(\GN{Q}_\gb) = \GN{Q}_\gb}$.
Then $P_\gk$ is cone homogeneous.
\end{theorem}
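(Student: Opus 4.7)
The plan is to prove by induction on $\ga \leq \gk$ that $P_\ga$ is cone homogeneous, and more precisely that for every pair $p_0, p_1 \in P_\ga$ there exist extensions $p_0^* \leq p_0$, $p_1^* \leq p_1$ and an isomorphism $\gp_\ga \func P_\ga/p_0^* \to P_\ga/p_1^*$ that is \emph{coherent} in the following sense: for every $\gb < \ga$, the restriction of $\gp_\ga$ to conditions supported in $\gb$ coincides with the corresponding isomorphism $\gp_\gb \func P_\gb/(p_0^*\restricted\gb) \to P_\gb/(p_1^*\restricted\gb)$ produced at stage $\gb$. This coherence is exactly what makes the limit stages work.

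The successor stage $\ga = \gb+1$ is the essential case. Apply the induction hypothesis at stage $\gb$ to $p_0\restricted\gb$ and $p_1\restricted\gb$, obtaining extensions $r_0, r_1$ and an isomorphism $\gp_\gb \func P_\gb/r_0 \to P_\gb/r_1$. The standing assumption $\forces_{P_\gb/r_0} \gp_\gb^{-1}(\GN{Q}_\gb) = \GN{Q}_\gb$ means that $\gp_\gb$, extended to names, carries $P_\gb$-names for elements of $\GN{Q}_\gb$ to $P_\gb$-names for elements of the same $\GN{Q}_\gb$. Hence both $p_0(\gb)$ and $\gp_\gb^{-1}(p_1(\gb))$ are $P_\gb/r_0$-names for conditions in $\GN{Q}_\gb$, and the forced cone homogeneity of $\GN{Q}_\gb$ yields names $\GN{q}_0^* \leq p_0(\gb)$ and $\GN{q}_1^* \leq \gp_\gb^{-1}(p_1(\gb))$ together with a name $\GN{\gs}$ for an isomorphism $\GN{Q}_\gb/\GN{q}_0^* \to \GN{Q}_\gb/\GN{q}_1^*$. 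Setting $p_0^* = r_0 \append \GN{q}_0^*$ and $p_1^* = r_1 \append \gp_\gb(\GN{q}_1^*)$, the map $\gp_{\gb+1}(q \append \GN{q}) = \gp_\gb(q) \append \gp_\gb(\GN{\gs}(\GN{q}))$ is the desired isomorphism from $P_{\gb+1}/p_0^*$ onto $P_{\gb+1}/p_1^*$.

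At a limit stage $\ga$, the construction proceeds by coordinate-wise assembly of the successor-stage isomorphisms along the union of the Easton supports of $p_0$ and $p_1$. The resulting $\gp_\ga$ acts coordinate-wise by $\gp_\gb$ on each coordinate $\gb < \ga$; since each $\gp_\gb$ preserves the triviality of the $\gb$-th coordinate, the Easton support condition is preserved under $\gp_\ga$, so direct limits at inaccessibles and inverse limits elsewhere are respected. The principal obstacle is entirely at the successor step: the automorphism-fixing hypothesis $\forces \gp^{-1}(\GN{Q}_\gb) = \GN{Q}_\gb$ is used crucially both to make sense of $\gp_\gb^{-1}(p_1(\gb))$ as a $\GN{Q}_\gb$-name and to confirm that $\gp_\gb(\GN{\gs}(\GN{q}))$ is again a $\GN{Q}_\gb$-name; without it one could not glue the isomorphism built below $\gb$ with the homogeneity isomorphism of $\GN{Q}_\gb$ at stage $\gb$. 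The limit-stage verification, once coherence has been carried along, is essentially bookkeeping.
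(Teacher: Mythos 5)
Your proposal is correct and follows essentially the same route as the paper: an induction carrying coherent cone-isomorphisms along the iteration, using the forced cone homogeneity of $\GN{Q}_\gb$ together with the hypothesis $\forces \gp_\gb^{-1}(\GN{Q}_\gb)=\GN{Q}_\gb$ to glue the stage-$\gb$ isomorphism with a homogeneity isomorphism of the iterand, and assembling coordinate-wise at limits. The only cosmetic difference is that the paper names the induced translation of $P_\gb$-names explicitly (its $\gr_\gb$) and singles out the case where both $\gb$-th coordinates are trivial so as to preserve Easton supports, a point you address in passing at the limit stage.
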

\begin{proof}
Fix two conditions $p_0, p_1 \in P_\gk$.
We will construct two conditions $p_0^* \leq p_0$ and $p_1^* \leq p_1$
	such that $P_\gk/p^*_0 \simeq P_\gk/p^*_1$, by which we will be done.
The construction is done by induction on $\ga \leq \gk$ as follows.

Assume $\ga = \gb+1$,
	$p^*_0\restricted \gb$, $p^*_1\restricted \gb$, and
	 $\gp_\gb \func P_\gb / p^*_0\restricted \gb \simeq
	 	 P_\gb/p^*_1\restricted \gb$
	 	 	were constructed.
	We know
	$\forces_{P_\gb/p^*_0\restricted \gb} 
			\formula{\GN{Q}_\gb = \gp_\gb^{-1}(\GN{Q}_\gb)
				\text { is cone homogeneous}}$.
	Let $\gr_\gb \func \GN{Q}_\gb \to \GN{Q}_\gb$
			be a function for which
		$\GN{\gt}[G] = \gr_\gb(\GN{\gt})[\gp_\gb''G]$ holds,
			whenever $G \subseteq P_\gb$ is generic
				and $\GN{\gt}[G] \in \GN{Q}[G]$. 
	If both $p_0(\gb)$ and $p_1(\gb)$ are the maximal element of
		$\GN{Q}_\gb$ then let
			$p^*_0(\gb)$ and $p^*_1(\gb)$ be the maximal element of
					$\GN{Q}_\gb$ and let
						$\gs_\gb = \id$ be the trivial automorphism
							of $\GN{Q}_\gb$.
	If either $p_0(\gb)$ or $p_1(\gb)$ is not the maximal element of
		$\GN{Q}_\gb$ then use
		the  the cone homogeneity of $\GN{Q}_\gb$
	 to find $P_\gb$-names
	$p^*_0(\gb)$,
	$p^*_1(\gb)$, and $\GN{\gs}_\gb$,
	such that $p^*_0 \restricted \gb \forces_{P_\gb}
						 \formula{p^*_0(\gb) \leq p_0(\gb)}$,
		 $p^*_1 \restricted \gb \forces_{P_\gb} \formula{p^*_1(\gb) \leq p_1(\gb)}$,
	 and
		$\GN{\gs}_\gb \func \GN{Q}_\gb/p^*_0(\gb) \simeq
							 \GN{Q}_\gb/\gr_\gb^{-1}(p^*_1(\gb))$
		 is an automorphism.
Whatever way $\GN{\gs}_\gb$ was constructed define
	the automorphism $\gp_{\gb+1}$ by letting
		$\gp_{\gb+1}(s) = \ordered{
						\gp_\gb(s\restricted \gb),
						\gr_\gb(\GN{\gs}_\gb(s(\gb)))
						}$,
	for each			
			 $s \leq p^*_0\restricted \gb+1$.
			
Assume $\ga$ is limit and for each $\gb < \ga$
		we have
			$p^*_0\restricted \gb \leq p_0 \restricted \gb$,
				 $p^*_1\restricted \gb \leq p_1 \restricted \gb$, and
	 $\gp_\gb \func P_\gb / p^*_0\restricted \gb \simeq
	 	 P_\gb/p^*_1\restricted \gb$ is an automorphism
	 	 	such that $\gp_\gb \restricted P_{\gb'} = \gp_{\gb'}$,
	 	 		whenever $\gb' \leq \gb$.
	For each $s \leq p^*_0 \restricted \ga$	let
	$\gp_\ga(s) \in P_\ga$ be the condition defined by setting
		for each $\gb < \ga$,
		$\gp_\ga(s)(\gb) = \gp_{\gb+1}(s\restricted \gb+1)(\gb)$.
\end{proof}
The following claim is practically the successor case of the previous one.
It is useful when we will have automorphism of forcing notions which
	are not necessarily cone homogeneous.
\begin{claim} \label{LiftAutomorphism}
Assume $P_0$ and $P_1$ are forcing notions with
	$\gp_0 \func P_0 \to P_1$ being an isomorphism.
Let $\GN{Q}_0$ be a $P_0$-name of a cone homogeneous forcing notion
	 such that
	$\forces_{P_0} \formula{\GN{Q}_0 = \GN{Q}_1}$, where
		$\GN{Q}_1 = \gp_0(\GN{Q}_0)$.

Then for each pair $1*\GN{q}_0 \in P_0 * \GN{Q}_0$ and
			$1*\GN{q}_1 \in P_1 * \GN{Q}_1$
			there are stronger conditions
				$1*\GN{q}^*_0 \leq 1*{\GN{q}_0}$ and
			$1*\GN{q}^*_1 \leq 1*{\GN{q}_1}$ such that
				$P_0 * \GN{Q}_0/1*\GN{q^*_0} \simeq
								P_1 * \GN{Q}_1/1*\GN{q}_1^*$.
\end{claim}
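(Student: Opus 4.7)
The plan is to mimic the successor step in the proof of \cref{Dobrinen}, the isomorphism $\gp_0$ playing the role of $\gp_\gb$ there. First, I use $\gp_0$ to transport names between the two sides. Since $\forces_{P_0} \formula{\GN{Q}_0 = \GN{Q}_1}$, there is a function $\gr_0$ sending each $P_0$-name $\GN{\gt}$ for an element of $\GN{Q}_0$ to a $P_1$-name $\gr_0(\GN{\gt})$ for an element of $\GN{Q}_1$, characterized by $\GN{\gt}[G] = \gr_0(\GN{\gt})[\gp_0''G]$ whenever $G \subseteq P_0$ is generic. This transport has an inverse $\gr_0^{-1}$ going the other way.

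Next, I pull $\GN{q}_1$ back to the $P_0$-side by setting $\GN{q}'_1 = \gr_0^{-1}(\GN{q}_1)$, a $P_0$-name for a condition in $\GN{Q}_0$. Working in $V^{P_0}$ I then have two conditions $\GN{q}_0$ and $\GN{q}'_1$ in the cone homogeneous forcing $\GN{Q}_0$, and I invoke cone homogeneity to produce $P_0$-names $\GN{q}^*_0$, $\GN{q}^{**}_1$, and $\GN{\gs}$ such that $\forces_{P_0} \formula{\GN{q}^*_0 \leq \GN{q}_0}$, $\forces_{P_0} \formula{\GN{q}^{**}_1 \leq \GN{q}'_1}$, and $\GN{\gs} \func \GN{Q}_0/\GN{q}^*_0 \simeq \GN{Q}_0/\GN{q}^{**}_1$ is forced to be an isomorphism. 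Finally I set $\GN{q}^*_1 = \gr_0(\GN{q}^{**}_1)$, which is a $P_1$-name satisfying $\forces_{P_1} \formula{\GN{q}^*_1 \leq \GN{q}_1}$.

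The desired isomorphism $\gp \func P_0 * \GN{Q}_0 / 1 * \GN{q}^*_0 \to P_1 * \GN{Q}_1 / 1 * \GN{q}^*_1$ is then defined by $\gp(s_0, \GN{s}_1) = \ordered{\gp_0(s_0),\, \gr_0(\GN{\gs}(\GN{s}_1))}$. Order preservation and bijectivity reduce, via the interpretation identity $\GN{\gt}[G] = \gr_0(\GN{\gt})[\gp_0''G]$, to the facts that $\gp_0$ is an isomorphism $P_0 \simeq P_1$ and that $\GN{\gs}$ is forced to be an isomorphism of the indicated cones inside $\GN{Q}_0$. The main obstacle is purely bookkeeping: keeping the transport $\gr_0$ and the automorphism $\GN{\gs}$ properly lined up so that composing them with $\gp_0$ yields a genuine forcing isomorphism of the two-step iterations. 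Because there is no limit step to handle and all the work takes place at a single successor-like stage, once the transport identity is set up the verification is routine.
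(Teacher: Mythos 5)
Your proposal is correct and follows essentially the same route as the paper's own proof: transport $\GN{q}_1$ back to a $P_0$-name via the interpretation identity, apply cone homogeneity of $\GN{Q}_0$ in $V^{P_0}$ to obtain the cone automorphism, push the resulting condition forward again, and combine with $\gp_0$ to get $\gp(p*\GN{q}) = \gp_0(p)*(\gr\circ\GN{\gs}(\GN{q}))$. No substantive differences.
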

\begin{proof}
Note there is a function $\gr$ taking $P_0$-names to $P_1$-names
	such that $\GN{q}_0[G_0] = \gr(\GN{q}_0)[G_1]$,
		where $G_0 \subseteq P_0$ is generic and $G_1 = \gp_0''G_0$.

Set $\GN{q}'_1 = \gr^{-1}(\GN{q}_1)$.
By the cone homogeneity of $\GN{Q}_0$ in $V^{P_0}$ there are
		 stronger conditions
	$\GN{q}^*_0 \leq \GN{q}_0$ and
	$\GN{q}^{\prime*}_1 \leq \GN{q}'_1$,
		for which there is  (a name of) an automorphism
		$\gp_1 \func \GN{Q}_0/\GN{q}^*_0 \to \GN{Q}_0/\GN{q}_1^{\prime*}$.
		Set $\GN{q}_1^* = \gr(\GN{q})_1^{\prime*}$.
		Since for generics $G_0, G_1$ as above we
	have $\GN{Q}_0/\GN{q}_1^{\prime*}[G_0] =
			\GN{Q}_1/\GN{q}_1^{*}[G_1]$ we get
				 $\gp(p*\GN{q}) = 
							\gp_0(p) * (\gr \circ \gp_1(\GN{q}))$ is the required automorphism.
\end{proof}
		
While the forcing notions we will use are cone homogeneous
	we will deliberately break some of their homogeneity.
	The relation between $\HOD^{V[G]}$ and $V$
		will be as follows.
\begin{claim} \label{HodIsInSmallExtension}
Assume $P$ is an ordinal definable cone homogeneous forcing notion.
Let $\gp \func P \to P$ be a projection.
Assume for each condition $p\in P$ and ordinals $\ga_1,\dotsc,\ga_l \in \On$,
	if $p \forces_P \gj(\ga_1, \dotsc, \ga_l)$ then
			$\gp(p) \forces_P \gj(\ga_1, \dotsc, \ga_l)$.
Then $\HOD^{V[G]} \subseteq V[\gp''G]$.
\end{claim}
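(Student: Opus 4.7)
The plan is to argue by $\in$-induction, reducing to the case of sets of ordinals. Every $a \in \HOD^{V[G]}$ is coded by a set of ordinals that itself lies in $\HOD^{V[G]}$, and the decoding is absolute inside any transitive model of ZFC containing that code; hence it suffices to show that every $\OD^{V[G]}$ set of ordinals belongs to $V[\gp''G]$. Fix such a set in the form
\[
  a = \setof{\ga}{V[G] \satisfies \gj(\ga, \bar\gb)},
\]
for some formula $\gj$ and ordinal parameters $\bar\gb$.

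Next, let $H$ be the upward closure of $\gp''G$ in $P$. Because $\gp$ is a projection, $H$ is $P$-generic over $V$, and trivially $V[H] = V[\gp''G]$. Working in $V[H]$, I would define
\[
  a^* = \setof{\ga \in \On}{\exists q \in H,\ q \forces_P \gj(\VN{\ga}, \VN{\bar\gb})},
\]
and then verify $a = a^*$, which immediately places $a$ in $V[\gp''G]$.

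For $a \subseteq a^*$: if $\ga \in a$ then some $p \in G$ forces $\gj(\VN{\ga}, \VN{\bar\gb})$, so the hypothesis on $\gp$ hands me the same forcing statement for $\gp(p)$, and $\gp(p) \in \gp''G \subseteq H$. For $a^* \subseteq a$: suppose some $q \in H$ forces $\gj(\VN{\ga}, \VN{\bar\gb})$. The parameters of this formula are ordinals, so cone homogeneity---via the ``essential fact'' recorded earlier in the section---says every condition of $P$ forces it; in particular $V[G] \satisfies \gj(\ga, \bar\gb)$, and $\ga \in a$.

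The forcing side of the argument is almost pure bookkeeping: the projection hypothesis is custom-made for the $a \subseteq a^*$ inclusion, and cone homogeneity produces the converse. The only step deserving any real care is the reduction to sets of ordinals, i.e., checking that every member of $\HOD^{V[G]}$ genuinely admits a code in $\HOD^{V[G]}$ and that $V[H]$ is closed enough to perform the decoding---both of which are standard but are the only places where sloppiness could cost a set of ordinals in the induction. I do not expect any serious obstacle beyond writing this coding argument out cleanly.
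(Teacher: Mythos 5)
Your proof is valid for the claim as literally stated, but it takes a genuinely different route from the paper's, and the difference matters for how the claim is meant to be used. The paper's proof makes no use of cone homogeneity at all: for a $\HOD^{V[G]}$ set of ordinals $\GN{A}[G]$ it fixes, for each $\ga$, a maximal antichain of conditions deciding $\formula{\ga\in\GN{A}}$, pushes it forward along $\gp$ to define a $\gp''P$-name $\GN{A}'$, and verifies $\GN{A}'[\gp''G]=\GN{A}[G]$ using only the projection hypothesis (applied to both $\gj$ and $\lnot\gj$, which also makes the pushed-forward name well defined) together with predensity of the image antichain in $\gp''P$. You, by contrast, invoke the cone-homogeneity dichotomy ($\forces_P\gj(\bar\ga)$ or $\forces_P\lnot\gj(\bar\ga)$) to obtain $a^*\subseteq a$. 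That makes your proof correct but self-defeating: ordinal definability plus full cone homogeneity already yield $\HOD^{V[G]}\subseteq V$ (this is exactly the ``essential fact'' you cite), so under your reading the stated conclusion is vacuous and the projection plays no role. More to the point, every application of this claim in the paper is to a forcing whose homogeneity has been deliberately broken --- e.g.\ $\mathbb{P}_E*\dot{\mathbb{C}}_{\gc,\dot f_G(\gk)}$, which cannot be cone homogeneous precisely because it is designed to put $f_G(\gk)$ into $\HOD$ --- and there only the projection hypothesis survives, so your argument does not transfer while the paper's does. The repair is short and lands you essentially on the paper's proof: if $\ga\notin a$, some $p\in G$ forces $\lnot\gj(\VN{\ga},\VN{\bar\gb})$, hence $\gp(p)\in\gp''G\subseteq H$ forces $\lnot\gj(\VN{\ga},\VN{\bar\gb})$ and is compatible with every member of the filter $H$, so no $q\in H$ can force $\gj(\VN{\ga},\VN{\bar\gb})$. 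Two minor remarks: the upward closure $H$ of $\gp''G$ in $P$ need not be $P$-generic over $V$ ($\gp''G$ is generic only for the subordering $\gp''P$), though you never actually use this; and your reduction to sets of ordinals is standard and unproblematic.
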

\begin{proof}
Assume $\forces_P \formula{\GN{A} \subseteq \On
					 \text{ and }\GN{A}\in \HOD}$.
Let $G \subseteq P$ be generic.
Then in $V[G]$ there are ordinals $\ga_1, \dotsc, \ga_l,\gb$ such that
	for each $\ga \in \On$,
\begin{align*}
	\ga \in \GN{A}[G] \iff V_\gb \satisfies \gj(\ga, \ga_1, \dotsc, \ga_l).
\end{align*}
Let $X^\ga_0 \union X^\ga_1 \subseteq P$ be
	a maximal antichain such that for each $p \in X^\ga_0$,
	\begin{align*}
		p \forces V_\gb \satisfies \lnot \gj(\ga,\ga_1, \dotsc, \ga_l),
	\end{align*}
	and for each $p \in X^\ga_1$,
	\begin{align*}
		p \forces V_\gb \satisfies \gj(\ga,\ga_1, \dotsc, \ga_l).
	\end{align*}
	Let $\GN{A}'$ be a $\gp''P$-name defined by setting
			 for each $p \in X^\ga_0 \union X^\ga_1$.
	\begin{align*}
		\gp(p) \forces_{\gp''P} \formula{\ga \in \GN{A}'} \iff
										 p \forces_P \formula{\ga \in \GN{A}}.
	\end{align*}
	Since $\gp''(X_0 \union X_1)$ is predense in $\gp''P$ we get
		$\GN{A}'[\gp''G] = \GN{A}[G]$,
		by which we are done.
\end{proof}

Let $C(\gt, \gm)$ be the Cohen forcing  for adding $\gm$ subsets to $\gt$, i.e.,
	$C(\gt, \gm) = \setof {f \func a \to 2}{a \subseteq \gm, \ \power{a}<\gt}$.
The following is well known.
\begin{claim} \label{CohenHomogen}
$\Chn(\gt, \gm)$ is cone homogeneous.
\end{claim}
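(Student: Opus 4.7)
My plan is to establish the stronger property that $\Chn(\gt,\gm)$ is weakly homogeneous; this suffices, since the paragraph preceding the claim records that weak homogeneity implies cone homogeneity. For each function $s \func \gm \to 2$ whose support $\setof{\gx < \gm}{s(\gx) = 1}$ has size less than $\gt$, I define a map $\gp_s \func \Chn(\gt,\gm) \to \Chn(\gt,\gm)$ by pointwise addition modulo~$2$: for $q \in \Chn(\gt,\gm)$ I set $\dom(\gp_s(q)) = \dom(q)$ and $\gp_s(q)(\gx) \equiv q(\gx) + s(\gx) \pmod{2}$ for each $\gx \in \dom(q)$.

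Three routine verifications show that $\gp_s$ is an automorphism of $\Chn(\gt,\gm)$. First, $\gp_s(q) \in \Chn(\gt,\gm)$ because $\dom(\gp_s(q)) = \dom(q)$ has size less than $\gt$. Second, $\gp_s \circ \gp_s = \id$, since $s(\gx) + s(\gx) \equiv 0 \pmod{2}$, so $\gp_s$ is a bijection. Third, $\gp_s$ is order-preserving, since the ordering $q' \leq q$ in $\Chn(\gt,\gm)$ amounts to $\dom(q') \supseteq \dom(q)$ together with $q' \restricted \dom(q) = q$, and both properties are invariant under coordinate-wise addition of the fixed function $s$.

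Given arbitrary $p_0, p_1 \in \Chn(\gt,\gm)$, I take $s \func \gm \to 2$ defined by $s(\gx) \equiv p_0(\gx) + p_1(\gx) \pmod{2}$ for $\gx \in \dom(p_0) \intersect \dom(p_1)$ and $s(\gx) = 0$ otherwise. The support of $s$ is contained in $\dom(p_0) \intersect \dom(p_1)$, whose size is at most $\power{\dom(p_0)} < \gt$, so $\gp_s$ is well defined. By construction $\gp_s(p_0)$ has domain $\dom(p_0)$ and agrees with $p_1$ on $\dom(p_0) \intersect \dom(p_1)$, hence $\gp_s(p_0) \union p_1$ is a well-defined function from a subset of $\gm$ of size less than $\gt$ into $2$, i.e.\ a condition in $\Chn(\gt,\gm)$ extending both $\gp_s(p_0)$ and $p_1$. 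Thus $\gp_s(p_0)$ and $p_1$ are compatible, which is precisely weak homogeneity.

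I do not foresee any real obstacle; the only point of care is verifying that $\supp(s)$ has size less than $\gt$, which is immediate from the size restriction built into the definition of $\Chn(\gt,\gm)$. The remainder of the argument is direct manipulation of mod-$2$ arithmetic.
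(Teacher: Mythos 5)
Your proof is correct, but it takes a different route from the paper's. The paper argues directly for cone homogeneity: given $f,g$, it extends both to conditions $f^*\leq f$ and $g^*\leq g$ with the common domain $\dom f\union\dom g$ and defines the cone isomorphism $\gp(f')=g^*\union(f'\setminus f^*)$ for $f'\leq f^*$ --- essentially a ``translation'' that swaps the finite (well, ${<}\gt$-sized) stem $f^*$ for $g^*$ and leaves the rest of the condition alone. You instead prove the stronger property of weak homogeneity via the mod-$2$ bit-flip automorphisms $\gp_s$, and then invoke the implication (recorded just before the claim) that weak homogeneity gives cone homogeneity. Both arguments are complete; the ${<}\gt$ bound on $\dom(p_0)\union\dom(p_1)$ that you flag is equally needed (and equally harmless) in the paper's version. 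What your approach buys is a genuine global automorphism group acting on $\Chn(\gt,\gm)$, which is conceptually cleaner and immediately reusable; what the paper's buys is that the cone isomorphism is produced explicitly in the exact form consumed by \cref{Dobrinen} (which manipulates names for the shifted conditions), and that the translation map generalizes verbatim to Cohen-style forcings whose conditions take values in a set other than $2$, where the XOR trick would have to be replaced by coordinatewise permutations of the value set.
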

\begin{proof}
Assume $f, g \in \bChn(\gt,\gm)$ are conditions.
Choose stronger conditions, $f^* \leq f$ and $g^* \leq g$, such that
	$\dom f^* = \dom g^* = \dom f \union \dom g$.
Define $\gp \func \bChn(\gt, \gm)/f^* \to \bChn(\gt, \gm)/g^*$ by setting
	$\gp(f') = g^* \union (f' \setminus f^*)$
		for each $f' \leq f^*$.
It is obvious $\gp$ is an automorphism.
\end{proof}
The following is immediate from the previous claim and \cref{Dobrinen}.
\begin{claim}
The Easton product of Cohen forcing notions is cone homogeneous.
\end{claim}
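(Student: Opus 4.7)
The plan is to recognize the Easton product as a special case of the backward Easton iteration treated in \cref{Dobrinen}. Fix the factors $\ordof{\Chn(\gt_\gb,\gm_\gb)}{\gb < \gk}$ of the Easton product, and define the iteration $\ordof{P_\ga,\GN{Q}_\gb}{\ga \leq \gk,\ \gb < \gk}$ with Easton support by setting each iterand to be the check-name $\GN{Q}_\gb = \VN{\Chn(\gt_\gb,\gm_\gb)}$ of the ground-model Cohen forcing. Because every $\GN{Q}_\gb$ depends only on ground-model data and not on the previously generated generic, this iteration is (densely) isomorphic to the original Easton product, so proving cone homogeneity for $P_\gk$ yields cone homogeneity for the product.

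It then remains to verify the two hypotheses of \cref{Dobrinen}. The first, that $\forces_{P_\gb} \formula{\GN{Q}_\gb \text{ is cone homogeneous}}$, is immediate from \cref{CohenHomogen}, since cone homogeneity of $\Chn(\gt_\gb,\gm_\gb)$ is absolute from $V$. The second, that for any conditions $p_0,p_1 \in P_\gb$ and any automorphism $\gp \func P_\gb/p_0 \to P_\gb/p_1$ one has $\forces_{P_\gb/p_0} \formula{\gp^{-1}(\GN{Q}_\gb) = \GN{Q}_\gb}$, follows from the observation made earlier in the excerpt that check-names are fixed by automorphisms: since $\GN{Q}_\gb = \VN{\Chn(\gt_\gb,\gm_\gb)}$, we get $\gp^{-1}(\GN{Q}_\gb) = \GN{Q}_\gb$ literally. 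Both hypotheses being established, \cref{Dobrinen} directly delivers cone homogeneity of $P_\gk$.

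I do not anticipate any real obstacle here: the role of this claim is to package the two preceding results, and the only content is the formal recognition that the Easton product's factors, being ground-model forcings, embed into \cref{Dobrinen}'s framework as trivial iterands whose names are check-names and therefore are automatically preserved by any automorphism of the iteration up to their stage.
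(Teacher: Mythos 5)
Your proposal is correct and is exactly the paper's intended argument: the paper states the claim is immediate from \cref{CohenHomogen} and \cref{Dobrinen}, treating the Easton product as a backward Easton iteration with ground-model (check-name) iterands so that both hypotheses of \cref{Dobrinen} are trivially satisfied. You have simply spelled out the verification that the paper leaves implicit.
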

\section{The cofinality $\gw$ case} \label{sec:CofinalityOmega}
Let us switch to the cone-homogeneity of the Extender Based Prikry forcing
	(\cite{GitikMagidor1992}).
Let $E$ be an extender as in \cite{Merimovich2011c} or \cite{MerimovichSupercompactExtender}.
Let $\PE$ be the extender based Prikry forcing derived from $E$.
We show $\PE$ is cone homogeneous.
\begin{claim} \label{HomogeneousExtenderBased} \label{MainHomogenTool}
For a pair of conditions $p_0, p_1 \in \PE$ there are direct extensions
		 $p_0^* \leq ^* p_0$  and $p_1^* \leq^* p_1$
	such that $\PE/p_0^* \simeq \PE/p_1^*$.
\end{claim}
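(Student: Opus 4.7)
The plan is to use direct extensions to bring $p_0$ and $p_1$ into a ``common form'' --- sharing both the same support and the same measure-one upper part --- and then to write down the cone isomorphism as the tautological swap that replaces one stem by the other.

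First I would recall the shape of a condition in $\PE$: it is (essentially) a pair $p = \ordered{f^p, A^p}$, where $f^p$ is a partial function whose domain $d^p$ records the coordinates at which $p$ has already committed to Prikry values, and $A^p$ is a measure-one set of $E$ restricting future non-direct extensions. The direct order $\leq^*$ allows one to enlarge $d^p$ (drawing new values from a projection of $A^p$) and to shrink $A^p$, but never to modify $f^p$ on a coordinate already in $d^p$.

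Next I would align the supports. Set $d^* = d^{p_0} \union d^{p_1}$ and, using the $\gk$-completeness and normality of the individual measures comprising $E$, direct-extend each $p_i$ to a condition $p_i'$ with $d^{p_i'} = d^*$. Then shrink both upper parts to a common measure-one set $A^* \subseteq A^{p_0'} \intersect A^{p_1'}$, after reindexing both sets to live over $d^*$. Put $p_i^* = \ordered{f^{p_i'}, A^*}$ for $i = 0,1$; by construction $p_i^* \leq^* p_i$, and $p_0^*, p_1^*$ now coincide on everything except the stem values on $d^*$.

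With this normalization in hand, the isomorphism is defined by the obvious swap. Given $q \leq p_0^*$, write $q = \ordered{f^{p_0^*} \union g^q, B^q}$, where $g^q$ records the additional Prikry choices at the coordinates in $d^q \setminus d^*$ and $B^q \subseteq A^*$. I would then set
\begin{equation*}
\gp(q) = \ordered{f^{p_1^*} \union g^q, B^q}.
\end{equation*}
Because $g^q$ and $B^q$ are transferred verbatim and only the fixed part of the stem below $d^*$ is exchanged, $\gp$ is visibly a bijection between $\PE/p_0^*$ and $\PE/p_1^*$ that respects both the $\leq$ and the $\leq^*$ relations, with inverse given by the symmetric swap. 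The main obstacle will be verifying that $\gp(q)$ really is a legal condition extending $p_1^*$: the extender-based forcing imposes coherence constraints between stem values and the upper part (projection compatibilities and, in the supercompact setting, the Mitchell structure on the generators carried by the stem), so one must argue that the admissibility of $g^q$ depends only on $A^*$ and $d^*$ and not on the specific values of $f^{p_0^*}$ or $f^{p_1^*}$. The whole point of insisting on a common upper part $A^*$ in the normalization step is to decouple these constraints; once this is checked in Merimovich's framework, legality over $p_0^*$ transfers automatically to legality over $p_1^*$ and the swap is an honest isomorphism of cones.
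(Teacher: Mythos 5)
Your normalization step is exactly the one the paper uses: pad both stems to the common domain $d = \dom f^{p_0}\union\dom f^{p_1}$ (assigning the empty sequence at the newly added coordinates) and shrink to a common measure-one set pulled back over $d$. The gap is in your displayed formula for $\gp$. A general $q\leq p_0^*$ is \emph{not} of the form $\ordered{f^{p_0^*}\union g^q, B^q}$ with $g^q$ supported on $\dom f^q\setminus d$: a non-direct extension satisfies $q\leq^* p^*_{0\ordered{\gn_0,\dotsc,\gn_{n-1}}}$ for some $\ordered{\gn_0,\dotsc,\gn_{n-1}}\in\leftexp{<\gw}{A^{q}}$, and these one-step extensions have already appended Prikry points to the stem at the coordinates of $d$ itself, so $f^q\restricted d\neq f^{p_0^*}$ and your decomposition (hence your map) is undefined, or else discards those appended points, off the cone of direct extensions. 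The swap must transfer the appended points too: read off the sequence $\ordered{\gn_0,\dotsc,\gn_{n-1}}$ with $q\leq^* p^*_{0\ordered{\gn_0,\dotsc,\gn_{n-1}}}$ and replace $f^q\restricted d$ by $f^{p_1^*}_{\ordered{\gn_0,\dotsc,\gn_{n-1}}}$, the stem of $p_1^*$ after performing the \emph{same} one-step extensions, while keeping $f^q\restricted(\dom f^q\setminus d)$ and $A^q$ verbatim. This is exactly where the common upper part $A$ earns its keep --- it guarantees the same $\gn_i$'s are legal one-step extensions of both $p_0^*$ and $p_1^*$, which is the ``decoupling'' you anticipated. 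With the formula corrected in this way your argument goes through and coincides with the paper's proof.
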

\begin{proof}
Set $d = \dom f^{p_0} \union \dom f^{p_1}$.
Set $f_0^* = f^{p_0} \union \setof {\ordered{\ga, \ordered{}}}
										{\ga \in d \setminus \dom f^{p_0}}$ and
		$f_1^* = f^{p_1} \union \setof {\ordered{\ga, \ordered{}}}
											{\ga \in d \setminus \dom f^{p_1}}$.
Choose a set $A \subseteq
					 \gp^{-1}_{d, \dom f^{p_0}}(A^{p_0}) \intersect
					 \gp^{-1}_{d, \dom f^{p_1}}(A^{p_1})$
	 so that
	both $p_0^* = \ordered{f_0^*, A}$ and $p_1^* = \ordered{f_1^*, A}$
					are conditions.
Define $\gp \func \PE/p_0^* \to \PE/p_1^*$
	by setting for each $p \leq p_0^*$,
		$\gp(p) = \ordered{
				f^{p_1^*}_{\ordered{\gn_0, \dotsc, \gn_{n-1}}} \union
										(f^{p} \restricted (\dom f^{p} \setminus d)),
				A^{p}
			}$, where
	$\ordered{\gn_0, \dotsc, \gn_{n-1}} \in \leftexp{<\gw}{A^{p}}$ and
			$p \leq^* p^*_{0\ordered{\gn_0, \dotsc, \gn_{n-1}}}$.
It is evident $\gp$ is an automorphism of $\PE$.
\end{proof}
For a generic filter $G \subseteq \PE$ define the function $f_G$
	by setting $f_G(\ga) = \bigunion \setof {f^p(\ga)}{p \in G, \ga \in \dom f^p}$.
	
Let us define the Easton products we are going to work with.
Let $A \subseteq \On$ be a set of ordinals.
Let $\bChn_{\gc,A}$ be the Easton product of the Cohen forcing notions
		yielding, in the generic extension, for each $\gx < \sup A$,
		\begin{align*}
			2^{\gc^{+\gx+1}} = \begin{cases}
				\gc^{+\gx+3}	&	\gx \in A, \\
				\gc^{+\gx+2}	& \gx \notin A.
			\end{cases}
		\end{align*}
When forcing with  $\bChn_{\gc,A}$
	 we will choose $\gc$ to be large enough
	so as not to interfere with our intended usage.
	Due to the (cone) homogeneity of $\PE$, the seuqences forced by $\PE$
	are not in $\HOD^{V^{\PE}}$.
We would like to break the homogeneity of $\PE$ so as to have the
	Prikry sequence enter $\HOD^{V^{\PE}}$.
We will achieve this  by coding the Prikry sequence into the power set function.
We will want the Cohen forcing used to be stabilized by
	reasonable automorphisms of $\PE$.
Thus define the projection $s \func \PE \to \PE$ by setting
	$s(p) = \ordered{f^p\restricted \set{\gk}, A^p\restricted \set{\gk}}$,
		where $A^p\restricted \set{\gk} =
			\setof {\gn\restricted \set{\gk}}{\gn \in A^p}$.
Note $s''\PE$ is the usual Prikry forcing based on $E(\gk)$.
Moreover if $G \subseteq \PE$ is generic then
	$s''G$ is $s''\PE$-generic.
\begin{claim} \label{ProductIso}
Let $\PP = \PE * \GN{\bChn}_{\gc,\GN{f}_G(\gk)}$.
Assume $\ordered{p_0, \GN{q}_0}, \ordered{p_1,\GN{q}_1} \in \PP$
	are conditions
	such that $s(p_0)$ and $s(p_1)$
		 are compatible.
Then there are stronger conditions,
		$\ordered{p^*_0, \GN{q}^*_0} \leq \ordered{p_0, \GN{q}_0}$ and
			$\ordered{p^*_1, \GN{q}^*_1} \leq \ordered{p_1, \GN{q}_1}$,
	such that $\PP/\ordered{p^*_0, \GN{q}^*_0} \simeq
							\PP/\ordered{p^*_1, \GN{q}^*_1}$.
\end{claim}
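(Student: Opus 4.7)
The plan is to combine \cref{MainHomogenTool} with \cref{LiftAutomorphism}, while arranging the isomorphism of $\PE$-cones so that it fixes the name $\GN{f}_G(\gk)$ that governs the second-coordinate forcing. Using the hypothesis that $s(p_0)$ and $s(p_1)$ are compatible, I would first pass to direct extensions $p_0' \leq^* p_0$ and $p_1' \leq^* p_1$ whose $\gk$-coordinates literally coincide: $\gk \in \dom f^{p_0'} \cap \dom f^{p_1'}$, $f^{p_0'}(\gk) = f^{p_1'}(\gk)$, and $A^{p_0'}\restricted\set{\gk} = A^{p_1'}\restricted\set{\gk}$. This is possible because compatibility of $s(p_0)$ and $s(p_1)$ in the usual $E(\gk)$-Prikry forcing provides a common direct extension of the $\gk$-stems and of the corresponding measure-one sets.

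Next, I would run the argument of \cref{MainHomogenTool} on $p_0', p_1'$ to produce direct extensions $p_0^* \leq^* p_0'$, $p_1^* \leq^* p_1'$ together with an isomorphism $\gp \colon \PE/p_0^* \to \PE/p_1^*$. Because the $\gk$-stems of $p_0^*$ and $p_1^*$ agree (inherited from the previous step), and because the isomorphism built in the proof of \cref{MainHomogenTool} only replaces stems on $d = \dom f^{p_0'} \cup \dom f^{p_1'}$ while keeping the extending sequence $\ordered{\gn_0, \dotsc, \gn_{n-1}}$ intact, the $\gk$-column of every condition is preserved: $f^{\gp(p)}(\gk) = f^p(\gk)$. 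Consequently $\gp$ fixes the canonical $\PE$-name $\GN{f}_G(\gk)$, and hence also the $\PE$-name $\GN{\bChn}_{\gc,\GN{f}_G(\gk)}$ of the second iterand.

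At this point the hypotheses of \cref{LiftAutomorphism} are met with $P_0 = \PE/p_0^*$, $P_1 = \PE/p_1^*$, the isomorphism $\gp$, and $\GN{Q}_0 = \GN{\bChn}_{\gc,\GN{f}_G(\gk)}$; the latter is forced to be cone homogeneous by \cref{CohenHomogen} together with the Easton-product version of cone homogeneity recorded just after it. Invoking \cref{LiftAutomorphism} then yields the desired further extensions $\GN{q}_0^*, \GN{q}_1^*$ and an isomorphism $\PP/\ordered{p_0^*,\GN{q}_0^*} \simeq \PP/\ordered{p_1^*,\GN{q}_1^*}$. The main obstacle, and the reason the compatibility assumption appears in the hypothesis, is exactly this preservation of $\GN{f}_G(\gk)$ by $\gp$: without first aligning the $\gk$-stems, the isomorphism of $\PE$-cones produced by \cref{MainHomogenTool} would shift the Prikry sequence at $\gk$, relabel the second-coordinate forcing, and destroy any hope of an induced isomorphism of $\PP$-cones.
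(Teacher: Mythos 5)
Your proposal is correct and follows essentially the same route as the paper: use the compatibility of $s(p_0)$ and $s(p_1)$ to equalize the $\gk$-coordinates, apply \cref{HomogeneousExtenderBased} to get an isomorphism of $\PE$-cones that fixes the name $\GN{\bChn}_{\gc,\GN{f}_G(\gk)}$, and conclude with \cref{LiftAutomorphism}. One small slip: the alignment step should use ordinary extensions $p_0'\leq p_0$, $p_1'\leq p_1$ rather than direct extensions, since making $f^{p_0'}\restricted\set{\gk}=f^{p_1'}\restricted\set{\gk}$ may require lengthening the shorter $\gk$-stem, which a $\leq^*$-extension cannot do.
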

\begin{proof}
Since		$s(p_0)$ and $s(p_1)$ are compatible,
	 we can
	choose conditions $p'_0 \leq p_0$ and $p'_1 \leq p_1$ such that
	$f^{p'_0}\restricted \set{\gk} = f^{p'_1}\restricted \set{\gk}$.
By \cref{HomogeneousExtenderBased}	there are direct extensions $p^*_0 \leq^* p'_0$ and $p^*_1 \leq^* p'_1$
	such that $\gp_0 \func \PE/p_0^* \simeq \PE/p_1^*$ is an automorphism.
Since $\bChn_{\gc,f_{G}(\gk)} = \gp(\bChn_{\gc,f_{G}(\gk)})$,
	where $G \subseteq \PE$ is generic,
	we are done by \cref{LiftAutomorphism}.
\end{proof}
The following is immediate from the previous claim.
\begin{corollary}
Assume $\ga, \ga_1, \dotsc, \ga_n \in \On$ and
	$\ordered{p,q} \forces_{\PP} \gj(\ga,\ga_1, \dotsc, \ga_n)$.
Then $\ordered{s(p),1} \forces_{\PP}
		 \gj(\ga, \ga_1, \dotsc, \ga_n)$.
\end{corollary}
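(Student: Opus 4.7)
The plan is to argue by contradiction using \cref{ProductIso}, exploiting the fact that an isomorphism of forcing notions fixes names for ground-model ordinals and therefore preserves any formula whose parameters are checks of ordinals.

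Suppose toward contradiction that $\ordered{s(p),1} \nforces_{\PP} \gj(\ga, \ga_1, \dotsc, \ga_n)$. Then there is an extension $\ordered{p', q'} \leq \ordered{s(p),1}$ with $\ordered{p',q'} \forces_{\PP} \lnot\gj(\ga, \ga_1, \dotsc, \ga_n)$. Since $p' \leq s(p)$ in $\PE$ and $s$ is a projection satisfying $s \circ s = s$, we have $s(p') \leq s(s(p)) = s(p)$, so $s(p)$ and $s(p')$ are compatible.

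Now I would apply \cref{ProductIso} to the pair $\ordered{p,q}$ and $\ordered{p',q'}$, whose $s$-projections are compatible by the previous step. This yields stronger conditions $\ordered{p^*, \GN{q}^*} \leq \ordered{p,q}$ and $\ordered{p^{\prime *}, \GN{q}^{\prime *}} \leq \ordered{p',q'}$ together with an isomorphism $\gp \func \PP/\ordered{p^*, \GN{q}^*} \simeq \PP/\ordered{p^{\prime *}, \GN{q}^{\prime *}}$. Because $\ordered{p^*, \GN{q}^*}$ extends $\ordered{p,q}$ it forces $\gj(\VN{\ga}, \VN{\ga}_1, \dotsc, \VN{\ga}_n)$, while $\ordered{p^{\prime *}, \GN{q}^{\prime *}}$ forces $\lnot\gj(\VN{\ga}, \VN{\ga}_1, \dotsc, \VN{\ga}_n)$.

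The finishing step uses the fact, already recorded in \cref{sec:HODthings}, that automorphisms of a forcing notion fix check-names, so $\gp(\VN{\ga}) = \VN{\ga}$ and similarly for the $\VN{\ga}_i$. Hence applying $\gp$ to the forcing relation gives $\ordered{p^{\prime *}, \GN{q}^{\prime *}} \forces_{\PP} \gj(\VN{\ga}, \VN{\ga}_1, \dotsc, \VN{\ga}_n)$, contradicting the previous line. The only genuine work is verifying the compatibility hypothesis of \cref{ProductIso} and noting $s \circ s = s$; everything else is immediate from the tools set up above, so I do not anticipate a substantial obstacle.
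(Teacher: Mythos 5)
Your proof is correct and follows essentially the same route as the paper's: both hinge on \cref{ProductIso} applied to $\ordered{p,\GN{q}}$ together with an arbitrary extension of $\ordered{s(p),1}$ (whose $s$-projections are compatible because $s$ is an idempotent, order-preserving projection), and both finish by noting that the resulting cone isomorphism fixes the check-names of ordinals. The paper phrases this as a density argument below $\ordered{s(p),1}$ rather than a proof by contradiction, but the content is identical.
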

\begin{proof}
	In order to show
	$\ordered{s(p), 1} \forces_{\PP}
		 \gj(\ga, \ga_1, \dotsc, \ga_n)$ we will show
		 	a dense subset of conditions below $\ordered{s(p), 1}$
		 			forces $\gj(\ga, \ga_1, \dotsc, \ga_n)$.
Let $\ordered{p_0, \GN{q}_0} \leq \ordered{s(p), 1}$ be
	an arbitrary condition.
By \cref{ProductIso} there is
	$\ordered{p_0', \GN{q}_0'} \leq \ordered{p_0, \GN{q}_0}$
			and $\ordered{p_1', \GN{q}_1'} \leq \ordered{p, \GN{q}}$ such that
			 $\PP/p_0'*\GN{q}_0' \simeq \PP/p_1'*\GN{q}_1'$.
	Thus $\ordered{p'_0,\GN{q}'_0} \forces_\PP
				\gj(\ga_1, \dotsc, \ga_n)$.
\end{proof}
The previous corollary together with
		 \cref{HodIsInSmallExtension} yields the following.
\begin{corollary} \label{AlmostThere0}
Assume $G*H$ is $\PP$-generic.
Then $\cf^{V[G*H]} \gk = \gw$ and
		$f_G(\gk) \in \HOD^{V[G*H]} \subseteq V[s''G]$.
\end{corollary}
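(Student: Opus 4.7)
My plan is to separate the three conclusions of the corollary and handle each using machinery already in place.

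For the cofinality claim I would observe that the generic $G$ projects via $s$ to a generic for the usual Prikry forcing $s''\PE$ on $E(\gk)$, which adds to $V$ the $\gw$-sequence $f_G(\gk)$ cofinal in $\gk$. Since $\gc$ is chosen above $\gk$, the smallest component of $\bChn_{\gc,f_G(\gk)}$ is $\Chn(\gc^+,\cdot)$, which is $\gc^+$-closed; the Easton product then inherits enough closure over $V[G]$ that it adds no new $\gw$-sequences of ordinals below $\gk$, preserving in particular $\cf(\gk) = \gw$.

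For $f_G(\gk) \in \HOD^{V[G*H]}$, recall that $\bChn_{\gc, f_G(\gk)}$ was rigged precisely to encode $f_G(\gk)$ into the continuum function; in $V[G*H]$ one has
\[
\gx \in f_G(\gk) \iff 2^{\gc^{+\gx+1}} = \gc^{+\gx+3}.
\]
The right-hand side is a statement about ordinals only, so $f_G(\gk)$ is ordinal-definable in $V[G*H]$, and, being a set of ordinals, lies in $\HOD^{V[G*H]}$.

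For the containment $\HOD^{V[G*H]} \subseteq V[s''G]$, I would apply \cref{HodIsInSmallExtension} to the forcing $\PP$ with the projection $\gp \func \PP \to \PP$ given by $\gp(\ordered{p, \GN{q}}) = \ordered{s(p),1}$. The hypothesis required there — that every formula with ordinal parameters forced by $\ordered{p,\GN{q}}$ is also forced by $\ordered{s(p),1}$ — is exactly the content of the corollary immediately preceding. \cref{HodIsInSmallExtension} then gives $\HOD^{V[G*H]} \subseteq V[\gp''(G*H)]$, and the image $\gp''(G*H)$ generates the filter $s''G$ on $s''\PE \cong \gp''\PP$; hence $V[\gp''(G*H)] = V[s''G]$.

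The one delicate point is that $\PP$ itself is \emph{not} cone homogeneous — that is precisely the design feature by which we forced $f_G(\gk)$ into $\HOD^{V[G*H]}$ in the first place. However, the proof of \cref{HodIsInSmallExtension} uses the source's cone-homogeneity only through the consequence that $\gp'' X^\ga_0$ and $\gp'' X^\ga_1$ stay incompatible in $\gp''\PP$, and this already follows from the projection property alone: a common extension of $\gp(p_0)$ and $\gp(p_1)$ for $p_0 \in X^\ga_0,\ p_1 \in X^\ga_1$ would witness both $\ga \in \GN{A}$ and $\ga \notin \GN{A}$. So the application goes through without needing further hypotheses.
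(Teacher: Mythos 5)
Your proof is correct and follows the route the paper intends: the containment $\HOD^{V[G*H]} \subseteq V[s''G]$ is obtained exactly as in the paper, by feeding the immediately preceding corollary into \cref{HodIsInSmallExtension} via the projection $\ordered{p,\GN{q}} \mapsto \ordered{s(p),1}$, while the cofinality claim and the coding of $f_G(\gk)$ into the continuum function are the standard fill-ins the paper leaves implicit. Your observation that \cref{HodIsInSmallExtension} is here applied to a forcing that is not literally cone homogeneous, and that its proof only uses the projection hypothesis, correctly patches a mismatch the paper glosses over.
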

We will get a special case of \cref{Thm1} by invoking the last corollary
	in a model of the form $L[A]$.
\begin{corollary} \label{Thm}
Assume $V = L[A]$, where $A \subseteq \On$ is a set of ordinals,
	and $E$ is an extender witnessing $\gk$ is a $\lto\gl$-supercompact cardinal.
	There is a forcing notion $R$ preserving the extender $E$
	such that in $V[I][G*H]$, where $I*G*H$ is $R*\PP$-generic,
	$\gk^+ = \gl$, $\cf\gk = \gw$,
	and
		$\HOD^{V[I][G][H]} = V[I][s''G]$.
\end{corollary}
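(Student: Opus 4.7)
The plan is to design $R$ as a cone-homogeneous, ordinal-definable preparation that codes $V$ into the continuum function of $V[I]$ at cardinals below $\gk$, while preserving the extender $E$, and then to run the argument of \cref{AlmostThere0} inside $V[I]$. For $R$, I would take an Easton-supported product of Cohen forcings $\bChn(\gd^+,\gd^{+\ga(\gd)})$ with $\ga(\gd)\in\set{2,3}$, indexed by the regular cardinals $\gd<\gk$, where the pattern $\gd\mapsto\ga(\gd)$ is a McAloon-style coding designed so that every element of $V[I]$ is OD in $V[I]$ from the continuum function below $\gk$. By \cref{CohenHomogen} and \cref{Dobrinen}, $R$ is cone homogeneous; a standard chain-condition and closure analysis combined with a Laver-style preparation shows that $R$ preserves the $\lto\gl$-supercompactness extender $E$.

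Working inside $V[I]$, since $E$ persists, I would form the extender-based Prikry forcing $\PE$ and set $\PP=\PE*\GN{\bChn}_{\gc,\GN{f}_G(\gk)}$ with $\gc$ chosen above $R$'s coding region so that the codings do not interfere. Running the proof of \cref{AlmostThere0} verbatim inside $V[I]$ yields $\cf^{V[I][G*H]}\gk=\gw$, $(\gk^+)^{V[I][G*H]}=\gl$, and $\HOD^{V[I][G*H]}\subseteq V[I][s''G]$. Moreover the $\bChn$-coding above $\gc$ places $\GN{f}_G(\gk)$, and hence $s''G$, in $\HOD^{V[I][G*H]}$.

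For the reverse inclusion, note that $\PP$ does not change the continuum function below $\gc$, so the continuum function of $V[I][G*H]$ below $\gk$ agrees with that of $V[I]$. This continuum function is OD in $V[I][G*H]$, and by design of $R$ every element of $V[I]$ is OD from it; hence $V[I]\subseteq\HOD^{V[I][G*H]}$. Combined with $s''G\in\HOD^{V[I][G*H]}$, this gives $V[I][s''G]\subseteq\HOD^{V[I][G*H]}$, and the equality follows.

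The main obstacle is arranging $R$ to be simultaneously cone homogeneous, OD-coding of all of $V[I]$ via the continuum function, and preserving of the supercompactness extender $E$. The extender preservation is the most delicate point, since $E$ is sensitive to $P(\gk)$; a Laver-style preparation that anticipates the coding pattern is the standard device, but compatibility with the McAloon-style coding requires care.
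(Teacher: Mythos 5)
There are two genuine gaps, both in the design of $R$, and they matter because the reverse inclusion $V[I][s''G]\subseteq\HOD^{V[I][G*H]}$ is exactly where the work is. The coding forcing you describe is an Easton \emph{product} whose pattern $\gd\mapsto\ga(\gd)$ is fixed in the ground model. The continuum function of $V[I]$ below $\gk$ is then itself a ground-model object, so the only sets it can render ordinal definable are sets of $V$ (essentially $A$, whence $V=L[A]\subseteq\HOD$). The Cohen generics added by $I$ are not OD from it; indeed, since $R$ is cone homogeneous and OD, $\HOD^{V[I]}\subseteq V$, so the $R$-generic is not in $\HOD^{V[I]}$ and nothing later makes it definable. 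Hence your step ``by design of $R$ every element of $V[I]$ is OD from the continuum function'' fails for the generic itself, and $V[I]\subseteq\HOD^{V[I][G*H]}$ is not established. The fix is to \emph{iterate} the coding, which is what the paper's $R$ does: $R_{n+1}=R_n*\GN{\bChn}_{\gc_n,A_n}$ where $A_{n+1}$ is the stage-$n$ generic, so each stage's generic is coded into the continuum function by the next stage, and $R$ is the inverse limit of the $R_n$; then every $A_n$, and hence $I$, lands in $\HOD^{V[I][G*H]}$.

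The second gap is the placement of the coding at regular cardinals \emph{below} $\gk$. This is what creates the extender-preservation problem you flag as delicate, and the device you invoke does not solve it: Laver preparation gives indestructibility under $\gk$-directed-closed forcing, whereas an Easton product supported below $\gk$ is not $\gk$-directed closed and can in general destroy supercompactness-type extenders; it would also have to be reconciled with the fact that the Prikry-generic region below $\gk$ is exactly where your coding lives. The paper sidesteps all of this by coding at cardinals far above the region of interest (the parameters $\gc_n$ are chosen ``large enough so as not to interfere''), where the Cohen forcing is highly closed, preservation of $E$ is immediate, and there is no collision with the later coding of $f_G(\gk)$ inside $\PP$. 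Your overall strategy --- prepare with a homogeneous coding forcing and then run \cref{AlmostThere0} inside $V[I]$ --- is the paper's strategy, but $R$ must be an iteration that codes its own generics and must act high, not low.
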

\begin{proof}
We will begin by defining the forcing notion $R$ so that
	for an $R$-generic filter $I$  we will have $\HOD^{V[I]} = V[I]$.
	
Define by induction the forcing notions $\ordof {R_n}{n\leq\gw}$ and sets $\ordof{A_n}{n<\gw}$,
	as follows.
Set $R_0 = {1}$ and $A_0 = A$.
For each $n<\gw$ define $R_{n+1}$ as follows.
In $V[G_n]$,
	where $G_n \subseteq R_n$ is generic over $V$,
		 let $\bChn_n$ be the forcing notion $\bChn_{\gc_n, A_n}$.
	Let $A_{n+1}$ be $\bChn_{n}$-generic over $V[G_n]$,
		 i.e., $A_{n+1}$ is a code for $A_n$.
Set $R_{n+1} = R_n * \GN{\bChn}_{n}$, where
	$\GN{\bChn}_n$ is an $R_n$-name for $\bChn_n$.
Let $R$ be the inverse limit of $\ordof {R_n}{n<\gw}$.
Let $I\subseteq R$ be generic.

Invoking $\cref{AlmostThere0}$ inside $V[I]$ and calculating
	$\HOD^{V[I][G][H]}$ we get
		$f_G(\gk) \in \HOD^{V[I][G][H]} \subseteq V[I][s''G]$.
	For each $n<\gw$, $A_n \in \HOD^{V[I][G][H]}$,
			thus $\HOD^{V[I][G][H]} \supseteq L[A][I][s''G]=
							V[I][s''G]$.
\end{proof}
			Hence we get:
\begin{corollary}
Assume $\gl$ is measurable and $\gk$ is $\lto\gl$-supercompact.
Then there is a generic extension in which
	 $\cf^{\HOD} \gk = \gw$, and
	$\gk^+$ (of the generic extension) is
		$\HOD$-measurable.
\end{corollary}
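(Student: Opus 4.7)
The idea is to run the argument of \cref{Thm} inside an appropriate inner model that retains both hypotheses of the corollary, and then to verify that the measure on $\gl$ survives the passage to $\HOD$ of the generic extension. Pass first to an inner model $W$ of $V$ of the form $L[A]$ in which an extender $E$ witnessing that $\gk$ is $\lto\gl$-supercompact and a normal measure $U$ on $\gl$ are both available and coded into $A$; for instance, take $W$ to be built from these two objects (an $L[E,U]$-style core model) and encode the resulting sequence as $L[A]$ for a single $A\subseteq\On$. The hypotheses of \cref{Thm} hold in $W$, so we work inside $W$ throughout.

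Now apply the construction of \cref{Thm} in $W$, choosing all the parameters $\gc_n$ (and the cardinal $\gc$ used in the definition of $\PP$) strictly above $\gl$. Each Cohen iterand $\bChn_n$ is then $\gl^+$-closed, so the inverse limit $R$ is $\gl^+$-closed as well; in particular $R$ adds no subsets of $\gl$, preserves $U$ as a normal measure on $\gl$ in $W[I]$, and preserves inaccessibility of $\gl$, so $2^\gk<\gl$ continues to hold in $W[I]$. The conclusion of \cref{Thm} then gives
\[
	\HOD^{W[I][G][H]}=W[I][s''G],\qquad (\gk^+)^{W[I][G][H]}=\gl,\qquad \cf^{\HOD}\gk=\gw.
\]

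It remains to show that $\gl$ is measurable in $W[I][s''G]$. The filter $s''G$ is $s''\PE$-generic over $W[I]$, and $s''\PE$ is the ordinary Prikry forcing associated with the normal measure $E(\gk)$ on $\gk$. Its cardinality is $2^\gk<\gl$, so $s''\PE$ is $\gl$-cc in $W[I]$, and every subset of $\gl$ in $W[I][s''G]$ is read off from a $\gl$-cc nice name over $W[I]$; a standard Silver--Kunen lifting argument then extends $U$ uniquely to a normal measure on $\gl$ in $W[I][s''G]$. Since $\HOD^{W[I][G][H]}=W[I][s''G]$ and $\gl=(\gk^+)^{W[I][G][H]}$, this exhibits $(\gk^+)^{W[I][G][H]}$ as measurable in $\HOD$. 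The main obstacle I expect is setting up the inner model $W$: one must ensure, fine-structurally, that a single $L[A]$-style model simultaneously captures $E$ and $U$, so that both the coding of \cref{Thm} and the preservation of $U$ through $R\ast\PE$ go through. Once $W$ is in place, the preservation of $\gl$'s measurability is routine, because the Prikry projection $s''\PE$ acts strictly below $\gl$.
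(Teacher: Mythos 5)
Your proposal is correct and follows exactly the route the paper intends (the paper leaves this proof implicit after \cref{Thm}): pass to an inner model $L[A]$ coding the extender and the measure, apply \cref{Thm} with the coding cardinals chosen above $\gl$, and lift $U$ to $\HOD = V[I][s''G]$ via the Levy--Solovay argument, since $\power{s''\PE} < \gl$ --- precisely the lifting used in the paper's subsequent corollary on $\HOD_{\set{a}}$. Your worry about fine structure is unnecessary: it suffices to let $A$ code $V_\gq$ for $\gq$ large enough that $L[A]$ computes $V_\gq$ correctly and hence inherits $E$ and $U$ with their witnessing properties.
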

In order to analyze $\HOD_{\set{a}}$, where $a \subseteq \gk$,
	 let us derive another line of corollaries 	stemming from \cref{ProductIso}.
The problem we face when dealing with $\HOD_{\set{a}}$ is
	an automorphism $\gp$ of $\PP$ might move $\GN{a}$, the name of $a$.
Thus we will need to fine tune the projection $s$.

First we recall the notion of good pair from
 \cite{MerimovichSupercompactExtender}.
We say the pair $\ordered{N, f}$ is a good pair
	if $N \subelem H_\gc$ is a $\gk$-internally approachable elementary substructure
		and there is a sequence
					 $\ordof{\ordered{N_\gx, f_\gx}}{\gx<\gk}$ such that
				$\ordof {N_\gx}{\gx < \gk}$ witnesses the $\gk$-internal approachablity
						of $N$,
							$f = \bigunion \setof {f_\gx}{\gx < \gk}$,
							 $\ordof{f_\gx}{\gx<\gk}$ is a $\leq^*$-decreasing
								continuous sequence in $\PP^*_f$,
									 and for each $\gx < \gk$,
							$f_{\gx} \in \bigintersect
									\setof {D \in N_\gx}{D \text{ is a dense open subset of }
																\PP^*_f}$,
												$f_\gx \subseteq N_{\gx+1}$,
											 and
												$f_\gx \in N_{\gx+1}$.

Define the projection $s_N \func \PE\to \PE$ by setting
	for each $p \in \PE$,
	$s_N(p) = \ordered{f^p\restricted N, A^p\restricted N}$.
\begin{corollary}
Assume $N \subelem H_\gc$ is an elementary substructure such that
	$p^*$ is an $\ordered{N, \PE}$-generic condition and
			$\ordered{N, f^{p^*}}$ is a good pair.
Let $\GN{a} \in N$ be a $\PE$-name such that
		$\forces_{\PE} \formula{\GN{a} \subseteq \gk}$.			
If $\ga, \ga_1, \dotsc, \ga_n \in \On$,
	$p \leq p^*$,
	and
	$\ordered{p,\GN{q}} \forces_{\PP} \gj(\ga,\ga_1, \dotsc, \ga_n, \GN{a})$,
	then
		 $\ordered{s_N(p), 1} \forces_{\PP}
		 \gj(\ga, \ga_1, \dotsc, \ga_n, \GN{a} )$.
\end{corollary}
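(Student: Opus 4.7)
The approach is to mirror the proof of the unnumbered corollary just before \cref{AlmostThere0} (the one handling $\gj(\ga,\ga_1,\dotsc,\ga_n)$), with the projection $s$ replaced by $s_N$ and \cref{ProductIso} replaced by a refinement whose isomorphism additionally fixes the name $\GN{a}$.

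First, I would prove a good-pair analogue of \cref{MainHomogenTool}: if $p_0, p_1 \leq p^*$ are conditions in $\PE$ with $s_N(p_0)$ and $s_N(p_1)$ compatible, then there exist direct extensions $p_0^{**}\leq^* p_0$ and $p_1^{**}\leq^* p_1$ together with an isomorphism $\gp_0 \func \PE/p_0^{**} \simeq \PE/p_1^{**}$ satisfying $s_N \circ \gp_0 = s_N$. The construction follows \cref{MainHomogenTool} verbatim: set $d = \dom f^{p_0} \cup \dom f^{p_1}$, extend both functions trivially on $d$ to obtain $f_0^*, f_1^*$, and shrink the measure-one set appropriately; compatibility of $s_N(p_0)$ and $s_N(p_1)$ ensures $f_0^*$ and $f_1^*$ agree on $d \cap N$, so the swap automorphism of \cref{MainHomogenTool} acts as the identity on the $N$-coordinates. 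Lifting via \cref{LiftAutomorphism} (noting that $\gk \in N$, hence $\GN{\bChn}_{\gc,\GN{f}_G(\gk)}$ is $\gp_0$-invariant) yields the desired analogue of \cref{ProductIso}: for any $\ordered{p_0, \GN{q}_0}, \ordered{p_1, \GN{q}_1} \leq \ordered{p^*,1}$ with $s_N(p_0), s_N(p_1)$ compatible, there are direct extensions with $\PP/\ordered{p_0^{**},\GN{q}_0^{**}} \simeq \PP/\ordered{p_1^{**},\GN{q}_1^{**}}$ via an isomorphism $\gp$ satisfying $\gp(\GN{a}) = \GN{a}$.

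The main obstacle is justifying $\gp(\GN{a}) = \GN{a}$, which does not follow from $\GN{a}\in N$ on its own. Here one uses that $p^*$ is $\ordered{N,\PE}$-generic and $\ordered{N,f^{p^*}}$ is a good pair: below $p^*$, any $\PE$-name $\GN{a}\in N$ for a subset of $\gk$ is equivalent to a name whose conditions lie in $\PE \cap N$, so the interpretation of $\GN{a}$ depends on a generic filter only through its $s_N$-projection. Since $\gp$ is the identity on the $s_N$-part, we get $\gp(\GN{a})[G] = \GN{a}[G]$ for every generic $G$ containing $p^*$, which is what is needed.

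With the refined \cref{ProductIso} available, the corollary follows exactly as in the unnumbered corollary before \cref{AlmostThere0}. Given an arbitrary $\ordered{p_0,\GN{q}_0}\leq \ordered{s_N(p),1}$, monotonicity of $s_N$ together with $p\leq p^*$ (and the good-pair property $s_N(p^*)=p^*$) give $p_0\leq p^*$ and $s_N(p_0)\leq s_N(p)$, so $s_N(p_0)$ and $s_N(p)$ are compatible. Applying the refined \cref{ProductIso} to $\ordered{p_0,\GN{q}_0}$ and $\ordered{p,\GN{q}}$ yields stronger conditions $\ordered{p_0', \GN{q}_0'}\leq \ordered{p_0, \GN{q}_0}$ and $\ordered{p_1', \GN{q}_1'}\leq \ordered{p, \GN{q}}$ together with an isomorphism of the cones below them that fixes $\ga, \ga_1, \dotsc, \ga_n$ (as they are ordinals) and $\GN{a}$. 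Hence $\ordered{p_0', \GN{q}_0'} \forces_{\PP} \gj(\ga,\ga_1,\dotsc,\ga_n, \GN{a})$, establishing density of conditions forcing $\gj$ below $\ordered{s_N(p),1}$ and completing the proof.
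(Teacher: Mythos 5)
Your proposal is correct and follows essentially the same route as the paper: a density argument below $\ordered{s_N(p),1}$, an application of the homogeneity of $\PE$ (\cref{HomogeneousExtenderBased}) after matching the $N$-parts of the two conditions, and the crucial use of the $\ordered{N,\PE}$-genericity of $p^*$ together with the good-pair property to see that the resulting automorphism fixes $\GN{a}$. The only cosmetic difference is that you arrange the automorphism to commute with $s_N$ and argue that $\GN{a}$ factors through the $s_N$-projection of the generic, whereas the paper keeps the automorphism of \cref{HomogeneousExtenderBased} as is and verifies $p_0^*\forces \formula{\GN{a}=\gp^{-1}(\GN{a})}$ directly from the fact that decisions about $\ga\in\GN{a}$ below $p^*$ are already made by the finite non-direct extensions $p^*_{\ordered{\gn_0,\dotsc,\gn_{l-1}}}$.
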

\begin{proof}
	In order to show
	$\ordered{s_N(p), 1} \forces_{\PP}
		 \gj(\ga, \ga_1, \dotsc, \ga_n, \GN{a} )$ we will show
		 	a dense subset of conditions below $\ordered{s_N(p), 1}$
		 			forces $\gj(\ga, \ga_1, \dotsc, \ga_n, \GN{a} )$.

Let $\ordered{p_0, \GN{q}_0} \leq \ordered{s_N(p), 1}$ be
	arbitrary condition.
We can choose $p_1 \leq p$ such that
	$s_N(p_0) = s_N(p_1)$.
By \cref{HomogeneousExtenderBased} there is $p_0^* \leq^* p_0$ and
			$p_1^* \leq^* p_1$ such that
				$\PE/p^*_0 \simeq \PE/p^*_1$.
				
Recall that if $r \leq p^*$, $\ga < \gk$,
	 and $r \forces_{\PE} \formula{\ga \in \GN{a}}$,
	then $p^*_{\ordered{\gn_0,\dotsc, \gn_{l-1}}} \forces
							 \formula{\ga \in \GN{a}}$,
		where 
				$\ordered{\gn_0, \dotsc, \gn_{l-1}} \in \leftexp{<\gw}{A}^{p^*}$
					is such that
					$r \leq^* p^*_{\ordered{\gn_0,\dotsc, \gn_{l-1}}}$.
Thus for each $\ordered{\gn_0, \dotsc, \gn_{l-1}} \in A^{p^*_0} = A^{p^*_1}$,
	$\ga < \gk$, and $r \in \PE/p^*_0$,
\begin{align*}
	&r \leq^* p^*_{0\ordered{\gn_0, \dotsc, \gn_{l-1}}}
		\text{ and }
		r \forces_{\PE} \formula{\ga \in \GN{a}} \iff \\
		& p_{\ordered{\gn_0, \dotsc, \gn_{l-1}}\restricted \dom f^{p}}
				\forces_{\PE} \formula{\ga \in \GN{a}} \iff \\
		& \gp(r) \leq^* p^*_{1\ordered{\gn_0, \dotsc, \gn_{l-1}}}
		\text{ and }
						\gp(r) \forces_{\PE} \formula{\ga \in \gp(\GN{a})}		.
\end{align*}				
Thus $p^*_0 \forces \formula{\GN{a} = \gp^{-1}(\GN{a})}$.
Use	\cref{ProductIso} to find stronger conditions
	$\ordered{p_0', \GN{q}_0'} \leq \ordered{p^*_0, \GN{q}_0}$
			and $\ordered{p_1', \GN{q}_1'} \leq \ordered{p^*_0, \GN{q}}$
			 such that
			 $\Tilde{\gp}\func \PP/p_0'*\GN{q}_0' \simeq \PP/p_1'*\GN{q}_1'$
			 	is an automorphism.
	Since
		 $\ordered{p'_1,\GN{q}'_1} \forces_{\PP}
				\gj(\ga_1, \dotsc, \ga_n, \GN{a})$
	we get
		$\ordered{p'_0,\GN{q}'_0} \forces_{\PP}
				\gj(\ga_1, \dotsc, \ga_n, \gp^{-1}(\GN{a}))$.
		We are done since 		
			$p'_0 \forces \formula{\GN{a} = \gp^{-1}(\GN{a})}$.
\end{proof}
\begin{corollary} \label{AlmostThere1}
Assume $G*H$ is $\PP$-generic, $a \in V[G*H]$, and $a \subseteq \gk$.
Then $\cf^{V[G*H]} \gk = \gw$
	and
		$f_G(\gk) \in \HOD^{V[G*H]}_{\set{a}} \subseteq V[s_X''G]$
	for a set $X \subseteq \dom E$  such that $\power{X} < \gl$.
\end{corollary}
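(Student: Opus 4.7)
The cofinality statement and the membership $f_G(\gk) \in \HOD^{V[G*H]}_{\set{a}}$ are immediate from \cref{AlmostThere0}, since $\HOD^{V[G*H]} \subseteq \HOD^{V[G*H]}_{\set{a}}$; the work is concentrated in producing $X$ witnessing $\HOD^{V[G*H]}_{\set{a}} \subseteq V[s_X'' G]$.

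Choosing $\gc$ sufficiently large ensures that $\bChn_{\gc, \GN{f}_G(\gk)}$ adds no bounded subsets of $\gc$, so $a \in V[G]$ and I may fix a $\PE$-name $\GN{a}$ for $a$. Using the good-pair machinery of \cite{MerimovichSupercompactExtender}, I would then find a $\gk$-internally approachable elementary substructure $N \subelem H_\gc$ of cardinality $\gk$ with $\GN{a} \in N$, together with a direct extension $p^* \in G$ such that $\ordered{N, f^{p^*}}$ is a good pair and $p^*$ is $\ordered{N, \PE}$-generic. The main obstacle here is the density, in the $\leq^*$-order and below any starting condition, of conditions $p^*$ satisfying this constellation of properties while incorporating the prescribed name $\GN{a}$; this is a core feature of the supercompact extender-based Prikry forcing and is the only nontrivial ingredient of the proof.

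Then I would mimic the proof of \cref{HodIsInSmallExtension}, substituting the preceding corollary for \cref{ProductIso}. Given a set of ordinals $B \in \HOD^{V[G*H]}_{\set{a}}$, fix an ordinal $\gb$, ordinals $\ga_1, \dotsc, \ga_l$, and a formula $\gj$ with $\ga \in B \iff V_\gb \satisfies \gj(\ga, \ga_1, \dotsc, \ga_l, a)$. For each such $\ga$ and each $\ordered{r, \GN{q}} \in G*H$ with $r \leq p^*$ deciding ``$V_\gb \satisfies \gj(\ga, \ga_1, \dotsc, \ga_l, \GN{a})$'', the preceding corollary yields that $\ordered{s_N(r), 1}$ decides it the same way. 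Assembling the resulting $s_N''\PE$-name for $B$ from the projected maximal antichains exactly as in \cref{HodIsInSmallExtension} gives $B \in V[s_N'' G]$. Setting $X = N \cap \dom E$ produces $|X| \leq |N| = \gk < \gl$, and since $s_N$ depends only on coordinates in $X$ we have $V[s_N'' G] = V[s_X'' G]$, completing the plan.
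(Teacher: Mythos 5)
Your proposal is correct and follows essentially the route the paper intends: \cref{AlmostThere0} gives the cofinality and membership claims, and the inclusion $\HOD^{V[G*H]}_{\set{a}} \subseteq V[s_X''G]$ comes from running the argument of \cref{HodIsInSmallExtension} with the good-pair corollary (the $s_N$ version of \cref{ProductIso}) below a condition $p^*\in G$ that is $\ordered{N,\PE}$-generic for some $N\ni\GN{a}$ of size $\gk$, taking $X=N\intersect\dom E$. The only ingredient you flag as nontrivial, the $\leq^*$-density of such $(N,p^*)$ with a prescribed name in $N$, is exactly what the paper imports from the good-pair machinery of the supercompact extender based forcing, so no gap remains.
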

\newcommand{\sA}{\mathfrak{A}}
We will get \cref{TheBigPicture} by beginning with a model where
	$\HOD \supseteq V_{\gl+2}$.
For this let us define the following coding.
Let $\sA = \ordof{A_\ga}{\ga < \gl^{+3}}$ be an enumeration of
	all subsets of $\gl^{++}$.
Let $\bChn_{\gc, \sA}$ be the Easton product of the Cohen forcing notions
	yielding, in the generic extension, for each $\ga < \gl^{+3}$ and
			$\gx < \gl^{++}$,
					\begin{align*}
			2^{\gc^{+\gl^{++}\cdot\ga+\gx+1}} = \begin{cases}
				\gc^{\gl^{++}\cdot\ga+\gx+3}	&	\gx \in A_\ga, \\
				\gc^{\gl^{++}\cdot\ga+\gx+2}	& \gx \notin A_\ga.
			\end{cases}
		\end{align*}

\begin{corollary}
Let
	$E$ is an extender witnessing $\gk$ is a $\lto\gl$-supercompact cardinal.
	In  $V[I][G*H]$, where $I*G*H$ is $\Chn_{\gc,\sA}*\PP$-generic,
	$\gk^+ = \gl$, 
	and for each set $a \subseteq \gk$,
	$\cf^{\HOD_{\set{a}}^{V[I][G*H]}}\gk=\gw$	and
		$\gl$	 is $\HOD_{\set{a}}^{V[I][G][H]}$-measurable.
\end{corollary}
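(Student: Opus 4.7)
The plan is to parallel the structure of \cref{Thm} with the richer coding $\sA$ in place of a single $A$, so that $V_{\gl+2}^V$ (and in particular a normal measure on $\gl$ derived from the supercompact extender $E$) ends up in $\HOD_{\set{a}}$ of the final extension. The coding step is carried out by $\bChn_{\gc,\sA}$ and the Prikry-plus-Cohen step by $\PP$, exactly as in the proof of \cref{Thm}, with the decoding now yielding much more structure.

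Choose $\gc$ large enough that $\bChn_{\gc,\sA}$ acts on cardinals strictly above $\gl$ and disjoint from those used by the Cohen part of $\PP$. The forcing $\bChn_{\gc,\sA}$ is ordinal definable and, being an Easton product of Cohen forcings, is cone homogeneous by \cref{CohenHomogen} together with the Easton product claim following it. Its high closure preserves the extender $E$, and after forcing with it the continuum function on $V[I]$ uniformly decodes each $A_\ga$, so $V_{\gl+2}^V \subseteq \HOD^{V[I]}$. Now force $\PP$ over $V[I]$; for any $a \subseteq \gk$ in $V[I][G*H]$, \cref{AlmostThere1} applied inside $V[I]$ yields $\cf^{V[I][G*H]}\gk = \gw$ and $\HOD_{\set{a}}^{V[I][G*H]} \subseteq V[I][s_X''G]$ for some $X \subseteq \dom E$ with $|X|^{V[I]} < \gl$. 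Since $\PP$ does not disturb the coding cardinals above $\gc$, the decoding persists, and $V_{\gl+2}^V \subseteq \HOD_{\set{a}}^{V[I][G*H]}$; in particular $\gk^+ = \gl$ in the extension and $f_G(\gk)$ witnesses $\cf^{\HOD_{\set{a}}^{V[I][G*H]}}\gk = \gw$.

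For measurability of $\gl$, the supercompactness embedding $j \func V \to M$ satisfies $j(\gk) \geq \gl$, so $U = \setof{A \subseteq \gl}{\gl \in j(A)}$ is a $\gl$-complete normal ultrafilter on $\gl$ in $V$, which belongs to $V_{\gl+2}^V \subseteq \HOD_{\set{a}}^{V[I][G*H]}$. To see $U$ witnesses measurability of $\gl$ inside this $\HOD_{\set{a}}$ it suffices to verify $\Pset(\gl) \cap \HOD_{\set{a}}^{V[I][G*H]} = \Pset(\gl)^V$. This is the principal obstacle: one uses $\HOD_{\set{a}}^{V[I][G*H]} \subseteq V[I][s_X''G]$ from \cref{AlmostThere1}, together with the facts that $\bChn_{\gc,\sA}$ adds no subsets of $\gl$ (as $\gc > \gl$) and that $s_X''\PE$ has size $<\gl$, to argue that every subset of $\gl$ in $\HOD_{\set{a}}^{V[I][G*H]}$ is already in $V$. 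Granting this, $U$ is a $\gl$-complete ultrafilter as computed in $\HOD_{\set{a}}^{V[I][G*H]}$, and the corollary follows.
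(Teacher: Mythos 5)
The coding step and the cofinality claim in your write-up match the paper's argument. The measurability step, however, rests on a false reduction: you say it suffices to verify $\Pset(\gl)\intersect\HOD_{\set{a}}^{V[I][G*H]}=\Pset(\gl)^V$, but this equality cannot hold --- it contradicts the other half of the very corollary you are proving. By \cref{AlmostThere1} the Prikry sequence $f_G(\gk)$ lies in $\HOD_{\set{a}}^{V[I][G*H]}$, and it is a new cofinal $\gw$-sequence in $\gk<\gl$, hence a new subset of $\gl$ not in $V$; likewise $a$ itself is an arbitrary (in general new) subset of $\gk$ belonging to $\HOD_{\set{a}}$. Smallness of $s_X''\PE$ does not prevent new subsets of $\gl$ from appearing: any forcing adding a new subset of some ordinal below $\gl$ adds a new subset of $\gl$. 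So the final step of your proof, as stated, cannot be carried out.

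What $\power{s_X''\PE}<\gl$ does give, and what the paper actually uses, is a L\'evy--Solovay lift rather than absoluteness of $\Pset(\gl)$. Since the forcing $s_X''\PE$ has size $<\gl$, the $V$-measure $U$ generates an ultrafilter
$\Bar{\Bar{U}}=\setof{B\in V[I][s_X''G]\intersect\Pset(\gl)}{\exists A\in U\ B\supseteq A}$
on $\gl$ in $V[I][s_X''G]$. Because the coding by $\bChn_{\gc,\sA}$ puts $V_{\gl+2}$, and in particular $U$, into $\HOD^{V[I]}\subseteq\HOD_{\set{a}}^{V[I][G*H]}$, one may define inside $\HOD_{\set{a}}^{V[I][G*H]}$ the filter $\Bar{U}$ generated by $U$ on $\Pset(\gl)\intersect\HOD_{\set{a}}^{V[I][G*H]}$. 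Since $\HOD_{\set{a}}^{V[I][G*H]}\subseteq V[I][s_X''G]$, this $\Bar{U}$ is exactly the restriction of the ultrafilter $\Bar{\Bar{U}}$ to the sets of $\HOD_{\set{a}}^{V[I][G*H]}$, hence is a $\gl$-complete ultrafilter there. Replacing your last paragraph with this lift-and-restrict argument repairs the proof; the rest of your outline is essentially the paper's.
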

\begin{proof}
Let $U \in V$ be a measure on $\gl$.
Then $U \in V_{\gl+2}$, hence
	 $U \in \HOD^{V[I]}$, where $I$ is $\Chn_{\gc, \sA}$-generic.

Working in $V[I]$ let $G*H$ be $\PP$-generic.
By \cref{AlmostThere1}  there is $X \subseteq \dom E$ such that
	$\power{X}< \gl$, $X \in V[I]$, and
		$f_G(\gk) \in \HOD_{\set{a}}^{V[I][G*H]} \subseteq V[I][s_X''G]$.	
The filter $s_X''G$ is $s_X''\PE$-generic.
Since $\power{X} < \gl$ we have $\power{s_X''\PE}<\gl$,
	hence any $V$-measure over $\gl$
		trivially lifts to a $V[s_X''G]$-measure over $\gl$.
	In particular $U$ lifts to $\Bar{\Bar{U}}$, which is
		definable by
		$\Bar{\Bar{U}} = \setof {B \in V[I][s_X''G] \intersect \Pset(\gl)}
				{\exists A\in U\, B \supseteq A}$.
Since $U  \in\HOD_{\set{a}}^{V[I][G*H]}$ we can define in
	$\HOD_{\set{a}}^{V[I][G*H]}$,
	$\Bar{U} =
			\setof {B \in \HOD_{\set{a}}^{V[I][G*H]} \intersect \Pset(\gl)}
				{\exists A\in U\, B \supseteq A}$.
Since $\HOD_{\set{a}}^{V[I][G*H]} \subseteq V[I][s_X''G]$ we necessarily
	have $\Bar{U} \subseteq \Bar{\Bar{U}}$.
	Thus
		$\Bar{U}$ is a measure on $\gl$ in $\HOD_{\set{a}}^{V[I][G*H]}$.
\end{proof}
\renewcommand{\PE}{\PP_{\Vec{E}}}
\section{The global result} \label{sec:GlobalResult}
In this section we  prove \cref{TheBigPicture}.
Thus throughout this section assume
	$\Vec{E}= \ordof{E_\gx}{\gx < \gl}$ is a Mitchell increasing sequence
 of extenders
	such that
		$\gl$ is measurable, and
	for each $\gx < \gl$,
		$\crit(j_\gx) = \gk$,
			$M_\gx \supseteq \leftexp{<\gl}{M}_\gx$, and
				$M_\gx \supseteq V_{\gl+2}$, where
					$j_\gx \func V \to \Ult(V, E_\gx) \simeq M_\gx$
							is the natural embedding.
(We demand $M_\gx \supseteq V_{\gl+2}$ since we want
	$\gl$ to be measurable in all ultrapowers, not only in $V$).

Let $\PE$ be the supercompact extender based Radin forcing
	using $\Vec{E}$. (see
			 \cite{Merimovich2011b,MerimovichSupercompactExtender}).
Let us deal with the homogeneity of the Extender Based Radin forcing
\begin{lemma} \label{HomogeneousExtenderBasedRadin0}
 \label{MainHomogenToolRadin}
For a pair of conditions $p_0, p_1 \in \PE^*$ there are direct extensions
		 $p_0^* \leq ^* p_0$  and $p_1^* \leq^* p_1$
	such that $\PE/p_0^* \simeq \PE/p_1^*$.
\end{lemma}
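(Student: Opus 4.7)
The plan is to follow the blueprint of \cref{HomogeneousExtenderBased} essentially verbatim, replacing the single measure-one set $A$ of the Prikry case by a measure-one tree $T$ built from the Mitchell increasing sequence $\Vec{E}$. Recall that a pure condition $p \in \PE^*$ has the form $\langle f^p, T^p\rangle$, where $f^p$ is a partial function whose domain sits inside the index set of the top extender and $T^p$ is a tree of legal one-step extensions, each node of which carries its own measure-one set coming from the appropriate $E_\gx$.

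First I would set $d = \dom f^{p_0} \cup \dom f^{p_1}$ and extend each $f^{p_i}$ to a function $f_i^*$ with domain $d$ by filling the new coordinates in with the trivial value $\ordered{}$; this is a direct extension of $p_i$ and is the exact analogue of the first step of \cref{MainHomogenTool}. Next I would pick a tree $T$ in $V$ such that the natural projections satisfy $\gp_{d, \dom f^{p_0}}(T) \subseteq T^{p_0}$ and $\gp_{d, \dom f^{p_1}}(T) \subseteq T^{p_1}$, level by level. Because the $E_\gx$ are Mitchell increasing, concentrated on $\gk$, and the original trees $T^{p_0}, T^{p_1}$ have measure one in every relevant ultrafilter, the intersection-and-pullback of the two trees to the common index set $d$ is still a measure-one tree, so such a $T$ exists. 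Set $p_i^* = \ordered{f_i^*, T}$; these are conditions below $p_i$ in $\leq^*$.

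Finally, define $\gp \func \PE/p_0^* \to \PE/p_1^*$ by the same swap of bases used in \cref{MainHomogenTool}: for $p \leq p_0^*$ (including the case where $p$ has already added a Radin stem $\ordered{\gn_0, \dotsc, \gn_{n-1}}$ below some node of $T$), put
\begin{equation*}
\gp(p) = \bordered{f^{p_1^*}_{\ordered{\gn_0,\dotsc,\gn_{n-1}}} \cup \bigl(f^p \restricted (\dom f^p \setminus d)\bigr),\ T^p},
\end{equation*}
and similarly update the tree-indices of the stem via the canonical identification between $\dom f^{p_0^*}$ and $\dom f^{p_1^*}$ given by both having domain $d$. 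Since $f_0^*$ and $f_1^*$ agree in domain and differ only by a ground-model relabeling of coordinates, this relabeling extends automatically through every level of $T$, so $\gp$ is a well-defined order-preserving bijection with inverse given by swapping the roles of $p_0^*$ and $p_1^*$.

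The main obstacle, as compared with the Prikry case, is bookkeeping for the tree $T$: one must check that the relabeling used to define $\gp$ respects the measure-one sets attached to each node of $T$ at every Mitchell level. This is where the assumption $M_\gx \supseteq \leftexp{<\gl}{M}_\gx$ and the Mitchell increasing property of $\Vec{E}$ pay off, since they guarantee that the extender-based projection maps $\gp_{d', d}$ commute with $j_\gx$ on the relevant objects, so the pullback tree really has measure one at every level. Once this is verified, all the remaining verifications --- that $\gp$ maps conditions to conditions, preserves $\leq$, and is its own inverse up to the swap --- are formally identical to the Prikry argument.
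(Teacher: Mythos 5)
Your proposal follows the paper's proof essentially verbatim: the same padding of $f^{p_0}$ and $f^{p_1}$ to the common domain $d$ with trivial values, the same choice of a common tree $T$ inside the intersection of the pullbacks $\bigcup_{\gx}\gp^{-1}_{\gx,d,\dom f^{p_i}}(T^{p_i})$, and the same base-swapping automorphism as in \cref{MainHomogenTool}. The one point where the paper is more careful is the definition of $\gp$ on a condition with nonempty stem: such a $p^0\leq p_0^*$ decomposes as a concatenation of blocks $p^0_0\append\dotsb\append p^0_n$, and the paper performs the function-swap blockwise against the corresponding decomposition of $p^*_{1\ordered{\gn_0,\dotsc,\gn_{n-1}}}$, whereas your displayed formula is the single-pair Prikry version; since you do note that the stem and its tree-indices must be translated through the canonical identification, this is a notational gloss rather than a genuine gap.
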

\begin{proof}
Set $d = \dom f^{p_0} \union \dom f^{p_1}$.
Set $f_0^* = f^{p_0} \union \setof {\ordered{\ga, \ordered{}}}
										{\ga \in d \setminus \dom f^{p_0}}$ and
		$f_1^* = f^{p_1} \union \setof {\ordered{\ga, \ordered{}}}
											{\ga \in d \setminus \dom f^{p_1}}$.
Choose a set $T \subseteq
					 \bigunion_{\gx<\mo(\Vec{E})}\gp^{-1}_{\gx, d, \dom f^{p_0}}
					 			(T^{p_0}) \intersect
					 \bigunion_{\gx<\mo(\Vec{E})}\gp^{-1}_{\gx, d, \dom f^{p_1}}
					 			(T^{p_1})$
	 so that
	both $p_0^* = \ordered{f_0^*, T}$ and $p_1^* = \ordered{f_1^*, T}$
					are conditions.
Define $\gp \func \PE/p_0^* \to \PE/p_1^*$
	by setting $\gp(p^0)$  for each $p^0 \leq p_0^*$ as follows.
Let $\ordered{\gn_0, \dotsc, \gn_{n-1}} \in \leftexp{<\gw}{T}^{p_0^*}$
	such that $p^0 \leq^* p^*_{0\ordered{\gn_0, \dotsc, \gn_{n-1}}}$.
	 Let $p^0 = p^0_0 \append \dotsb p^0_n$ and
	 $p^*_{1\ordered{\gn_0, \dotsc, \gn_{n-1}}} =
	 		 p^{1*}_0 \append \dotsb \append p^{1*}_n$.
	 Let $\gp(p^0) =  p^1_0 \append \dotsb \append p^1_n$,
	 	where
	 			$p^1_i = \ordered{f^{p^{1*}_i} \union f^{p^0_i}\restricted
	 				(\dom f^{p^0_i} \setminus \dom f^{p^{1*}_i}),
	 				T^{p^0_i}}$.
It is evident $\gp$ is an automorphism.
\end{proof}
Recall that for a condition $p = p_0 \append \dotsb \append p_n$
	we have $\PE/p \simeq \PP_{\Vec{e}_0}/p_0 \dotsb \append \PP_{\Vec{e}_n}/p_n$,
	where $p_i \in \PP^*_{\Vec{e}_i}$ and $\Vec{e}_n = \Vec{E}$.
Thus the following is an immediate corollary of the above lemma by recursion.
\begin{corollary} \label{HomogeneousExtenderBasedRadin}
Assume $p^0,p^1 \in \PE$ are conditions such that
	$p^0,p^1 \in \prod_{0 \leq i \leq n} P^*_{\Vec{e}_i}$.
Then there are direct extensions $p^{0*} \leq^* p^0$ and
	$p^{1*} \leq^* p^1$ such that $\PE/p^{0*} \simeq \PE/p^{1*}$.
\end{corollary}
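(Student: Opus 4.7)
The strategy is to induct on $n$, leveraging the product decomposition
\[
\PE/p \simeq \PP_{\Vec{e}_0}/p_0 \append \cdots \append \PP_{\Vec{e}_n}/p_n
\]
recalled in the paragraph immediately preceding the corollary. The base case $n=0$ is literally Lemma \ref{MainHomogenToolRadin}. For the inductive step I would write $p^0 = p^0_0 \append \cdots \append p^0_n$ and $p^1 = p^1_0 \append \cdots \append p^1_n$ with $p^0_i, p^1_i \in \PP^*_{\Vec{e}_i}$, handle the top coordinate by Lemma \ref{MainHomogenToolRadin} applied inside $\PP^*_{\Vec{e}_n}$, and handle the lower part $p^0_0 \append \cdots \append p^0_{n-1}$ versus $p^1_0 \append \cdots \append p^1_{n-1}$ by the induction hypothesis.

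Concretely, I would first obtain direct extensions $p^{0*}_n \leq^* p^0_n$ and $p^{1*}_n \leq^* p^1_n$ with an isomorphism $\pi_n \func \PP_{\Vec{e}_n}/p^{0*}_n \simeq \PP_{\Vec{e}_n}/p^{1*}_n$ from Lemma \ref{MainHomogenToolRadin}, and then direct extensions $p^{0*}_{<n}, p^{1*}_{<n}$ of the lower concatenations together with an isomorphism $\pi_{<n}$ of the corresponding quotients by the inductive hypothesis. Setting $p^{0*} = p^{0*}_{<n} \append p^{0*}_n$, $p^{1*} = p^{1*}_{<n} \append p^{1*}_n$, and defining $\pi$ coordinatewise by
\[
\pi(q_{<n} \append q_n) = \pi_{<n}(q_{<n}) \append \pi_n(q_n)
\]
should yield the desired direct extensions and isomorphism $\PE/p^{0*} \simeq \PE/p^{1*}$.

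The one step that genuinely needs checking is that the concatenation is compatible with the two partial isomorphisms: the automorphism produced by Lemma \ref{MainHomogenToolRadin} on the top factor must not interfere with the way the lower factors are glued to it, and conversely. This works out because, in the product decomposition above, the cut points separating the top factor $\PP_{\Vec{e}_n}$ from the lower ones are determined by the fixed direct extension, and the construction in Lemma \ref{MainHomogenToolRadin} preserves these cut points (the automorphism only reshapes the $f$-parts and the measure trees above an untouched skeleton, cf.\ the explicit formula for $\pi$ in that proof). Consequently the coordinatewise gluing is well-defined and bijective, and the induction goes through with no further work.
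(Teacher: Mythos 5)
Your proposal is correct and is essentially the paper's argument: the paper gives no separate proof, simply declaring the corollary ``immediate \dots by recursion'' from the product decomposition $\PE/p \simeq \PP_{\Vec{e}_0}/p_0 \append \dotsb \append \PP_{\Vec{e}_n}/p_n$ together with Lemma~\ref{MainHomogenToolRadin} applied factor by factor, which is exactly your coordinatewise induction. Your extra check that the gluing respects the block structure is the right thing to verify and causes no problem.
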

For a condition $p \in \PE^*$ define its projection $s(p)$
 to the normal measure 	 by setting
		$s(p) = \ordered{f^p\restricted \set{\gk}, T^p\restricted \set{\gk}}$.
Define by recursion the projection of arbitrary condition
	$p = p_0 \append \dotsb \append p_n \in \PE$ by setting
		$s(p) = s(p_0 \append \dotsb \append p_{n-1}) \append s(p_n)$.
It is obvious $s''\PE$ is the Radin forcing using the measures
	$\ordof{E_\gx(\gk)}{\gx < \mo(\Vec{E})}$.
Moreover, if $G$ is $\PE$-generic then $s''G$ is $s''\PE$-generic.

Let $G$ be $\PE$-generic.
Work in $V[G]$.
Let $\ordof {\gk_\ga}{\ga < \gk}$ be the increasing enumeration of
	$f_G(\gk)$.
Define the sequence $\ordof{\gm_\ga, U_\ga}{\ga<\gk}$ by setting
	for each $\ga < \gk$,
	\begin{align*}
		\gm_\ga = \begin{cases}
			\gk_\ga^+	&	\ga \text{ is limit}, \\
			\gk_\ga			& \ga \text{ is successor}.
		\end{cases}
	\end{align*}
Note: If $\ga$ is limit, then $\gm_\ga = \gk_\ga^+$ is $V$-measurable
	since it is a reflection of $\gl$ being measurable in one of
			the $V$-ultrapowers.
On the other hand,
	if $\ga$ is successor then $\gm_\ga = \gk_\ga$ is $V$-measurable	 since
	$E_0$ concentrates on measurables.
Thus for each $\ga < \gk$ we can choose
	$U_\ga \in V$ which is a $V$-measure over $\gm_\ga$.	
Define the backward Easton iteration
	$\ordof {P_\ga, \GN{Q}_\gb}{\ga  \leq \gk,\, \gb< \gk}$
		by setting for each $\ga < \gk$,
	$\GN{Q}_\ga = \Col(\gm_\ga, \lto\gk_{\ga+1})$.
By \cref{Dobrinen} the iteration $P_\gk$ is cone homogeneous.
Let $H \subseteq P_\gk$ be generic.

Working in $V[G*H]$ we want to pull into the $\HOD$ of a generic extension
	the $V$-measures $U_\ga$'s.
Define the backward Easton iteration
	$\ordof{R_\ga, \GN{S}_\gb}{\ga\leq \gk, \ \gb < \gk}$
		by setting for each $\gb < \gk$,
			 $\GN{S}_\gb = \Chn_{\gc_\gb,\mathfrak{A}_\gb}$, where,
				$\mathfrak{A}_\gb = \setof {A \in V}
						{A \subseteq (\gm_\gb^{++})_V}$ and
					$\sup_{\gga <\gb} \gc_\gga < \gc_\gb < \gk$.
By \cref{Dobrinen} $R_\gk$ is cone homogeneous.

One final definition is in order before the following claim.
If $p \in \PE^*$ then set $\gk(p) = \ran f^p(\gk)$.
If $p = p_0 \append \dotsb \append p_n \in \PE$ then
	set by recursion
		$\gk(p) = \gk(p_0 \append \dotsb \append p_{n-1}) \append \gk(p_n)$.
Note $\gk(p)$ is the subset of $f_G(\gk)$ decided by the condition $p$.
\begin{claim} \label{ProductIsoRadin}
Let $\PP = \PE * \GN{P}_\gk * \GN{R}_\gk$.
Assume $\ordered{p_0, \GN{q}_0, \GN{q}_0},
		 \ordered{p_1,\GN{q}_1, \GN{r}_1} \in \PP$
	are conditions
	such that $s(p_0)$ and $s(p_1)$
		 are compatible.
Then there are stronger conditions,
		$\ordered{p^*_0, \GN{q}^*_0, \GN{r}^*_0} \leq
				\ordered{p_0, \GN{q}_0, \GN{r}_0}$ and
			$\ordered{p^*_1, \GN{q}^*_1, \GN{q}^*_1} \leq
						 \ordered{p_1, \GN{q}_1, \GN{r}_1}$,
	such that $\PP/\ordered{p^*_0, \GN{q}^*_0, \GN{r}^*_0} \simeq
							\PP/\ordered{p^*_1, \GN{q}^*_1, \GN{r}^*_1}$.
\end{claim}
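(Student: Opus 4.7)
The plan is to mimic \cref{ProductIso}, with two adaptations: handling the Radin decomposition at the extender level, and threading the resulting automorphism through the two further Easton iterations $\GN{P}_\gk$ and $\GN{R}_\gk$. As a first step, I will exploit the compatibility of $s(p_0)$ and $s(p_1)$ in the projected Radin forcing $s''\PE$: taking a common extension of them there and lifting it back, I obtain $p'_0 \leq p_0$ and $p'_1 \leq p_1$ with $s(p'_0) = s(p'_1)$, so in particular both admit the same Radin decomposition $p'_0, p'_1 \in \prod_{0 \leq i \leq n} \PP^*_{\Vec{e}_i}$ with the same sequence of extenders.

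Next I invoke \cref{HomogeneousExtenderBasedRadin} to obtain direct extensions $p^*_0 \leq^* p'_0$, $p^*_1 \leq^* p'_1$ and an isomorphism $\gp_0 \func \PE/p^*_0 \simeq \PE/p^*_1$. The point to check, by inspecting the construction in \cref{HomogeneousExtenderBasedRadin0} and unwinding the recursion over the decomposition, is that since $p^*_0$ and $p^*_1$ already agree on the $\gk$-coordinate of $f$ (and on the trees restricted to $\set{\gk}$), the automorphism only permutes coordinates in $\dom f^{\cdot} \setminus \set{\gk}$, and therefore satisfies $s(\gp_0(p)) = s(p)$ for every $p \leq p^*_0$. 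Consequently, whenever $G$ is $\PE$-generic with $p^*_0 \in G$, the filter $G' = \gp_0''G$ satisfies $s''G = s''G'$; in particular $f_{G}(\gk) = f_{G'}(\gk)$, and the derived sequences $\ordof{\gk_\ga, \gm_\ga}{\ga < \gk}$ coincide in $V[G]$ and $V[G']$.

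I then apply \cref{LiftAutomorphism} in two stages. The interpretation of $\GN{P}_\gk$ depends only on $f_G(\gk)$ (through the iterands $\GN{Q}_\ga = \Col(\gm_\ga, \lto \gk_{\ga+1})$), and $P_\gk$ is cone homogeneous by \cref{Dobrinen}; hence $\forces_{\PE} \GN{P}_\gk = \gp_0(\GN{P}_\gk)$, and \cref{LiftAutomorphism} promotes $\gp_0$ to an isomorphism $\gp_1$ of cones of $\PE * \GN{P}_\gk$ below suitable strengthenings. Similarly, $\GN{R}_\gk$ is an Easton product of Cohen forcing notions, and is cone homogeneous by \cref{Dobrinen} and \cref{CohenHomogen}; its definition uses only the $\mathfrak{A}_\gb$'s and canonical choices of $\gc_\gb$'s, all recoverable from $f_G(\gk)$, so $\gp_1$ also fixes $\GN{R}_\gk$. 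A second application of \cref{LiftAutomorphism} to $(\PE * \GN{P}_\gk) * \GN{R}_\gk$ produces the desired $\ordered{p^*_0, \GN{q}^*_0, \GN{r}^*_0}$ and $\ordered{p^*_1, \GN{q}^*_1, \GN{r}^*_1}$ together with the isomorphism.

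The main obstacle is the $s$-invariance of the Radin-level automorphism: verifying that the construction underlying \cref{HomogeneousExtenderBasedRadin0} genuinely fixes the $\gk$-coordinate when starting from conditions whose $s$-projections agree, and that this invariance is preserved under the recursion defining $s$ on composite conditions. Once that is in place, the rest reduces to routine name-bookkeeping via \cref{LiftAutomorphism} together with the cone homogeneity of the two Easton iterations.
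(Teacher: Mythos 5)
Your proposal is correct and follows essentially the same route as the paper: use compatibility of the $s$-projections to pass to extensions with matching Radin decompositions and matching $\gk$-coordinates, apply \cref{HomogeneousExtenderBasedRadin} to get an isomorphism of cones of $\PE$ that fixes the names $\GN{P}_\gk$ and $\GN{R}_\gk$ (since these depend only on $f_G(\gk)$), and finish with \cref{LiftAutomorphism}. The only cosmetic difference is that you apply \cref{LiftAutomorphism} in two stages while the paper treats $\GN{P}_\gk * \GN{R}_\gk$ as a single cone-homogeneous iterand; your explicit verification that the automorphism is $s$-invariant is a point the paper leaves implicit.
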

\begin{proof}
Since		$s(p_0)$ and $s(p_1)$ are compatible
	there are stronger conditions $p'_0 \leq p_0$ and $p'_1 \leq p_1$
		and 	Mithcell increasing sequences $\setof{\Vec{e}_i}{i \leq k}$
	such that
				 $p'_0, p'_1 \in
			 			\prod_{i\leq k} \PP_{\Vec{e_i}}$ and
			 			$\gk(p'_0) = \gk(p'_1)$.
	By the previous corollary there are direct extensions
		$p_0^* \leq^* p_0'$ and $p_1^{*} \leq^* p'_1$
				 	such that
				 		$\gp\func \PE/p_0^{*} \simeq \PE/p_1^{*}$.
	Most importantly we have
			$\gp(\GN{P}_\gk*\GN{Q}_\gk) = \GN{P}_\gk*\GN{Q}_\gk$
				is cone homogeneous.
	Thus by \cref{LiftAutomorphism} we are done.
\end{proof}
\begin{corollary}
If $\ordered{p, \GN{q}, \GN{r}} \forces_{\PP}
				\gj(\ga_1,\dotsc,\ga_l)$,
	then $\ordered{s(p), 1, 1} \forces_{\PP} \gj(\ga_1,\dotsc,\ga_l)$.
\end{corollary}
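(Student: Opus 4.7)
The plan is to imitate the proof of the corresponding corollary for the Prikry case, using \cref{ProductIsoRadin} as the homogeneity tool in place of \cref{ProductIso}. As before, it suffices to exhibit a dense set of conditions below $\ordered{s(p),1,1}$ that force $\gj(\ga_1,\dotsc,\ga_l)$.

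Let $\ordered{p_0,\GN{q}_0,\GN{r}_0} \leq \ordered{s(p),1,1}$ be arbitrary. Since $p_0 \leq s(p)$, the projections $s(p_0)$ and $s(p)$ are compatible in $s''\PE$; trivially $s(p)$ is compatible with itself, so $s(p_0)$ and $s(p)$ are compatible. Applying \cref{ProductIsoRadin} to $\ordered{p_0,\GN{q}_0,\GN{r}_0}$ and $\ordered{p,\GN{q},\GN{r}}$ yields stronger conditions
\begin{align*}
\ordered{p_0^*,\GN{q}_0^*,\GN{r}_0^*} &\leq \ordered{p_0,\GN{q}_0,\GN{r}_0}, \\
\ordered{p_1^*,\GN{q}_1^*,\GN{r}_1^*} &\leq \ordered{p,\GN{q},\GN{r}},
\end{align*}
together with an isomorphism $\PP/\ordered{p_0^*,\GN{q}_0^*,\GN{r}_0^*} \simeq \PP/\ordered{p_1^*,\GN{q}_1^*,\GN{r}_1^*}$.

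Since the automorphism is built from the extender-based ingredient of \cref{HomogeneousExtenderBasedRadin} together with the lift supplied by \cref{LiftAutomorphism}, it fixes every ground-model term, and in particular each $\VN{\ga_i}$. Because $\ordered{p_1^*,\GN{q}_1^*,\GN{r}_1^*} \leq \ordered{p,\GN{q},\GN{r}}$ forces $\gj(\ga_1,\dotsc,\ga_l)$, transporting along the isomorphism gives $\ordered{p_0^*,\GN{q}_0^*,\GN{r}_0^*} \forces_\PP \gj(\ga_1,\dotsc,\ga_l)$. As $\ordered{p_0,\GN{q}_0,\GN{r}_0}$ was arbitrary, the set of conditions below $\ordered{s(p),1,1}$ forcing $\gj(\ga_1,\dotsc,\ga_l)$ is dense, completing the proof.

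The only point requiring care is verifying that \cref{ProductIsoRadin} applies to the pair $\ordered{p_0,\GN{q}_0,\GN{r}_0}$ and $\ordered{p,\GN{q},\GN{r}}$; this reduces to the compatibility of $s(p_0)$ and $s(p)$, which is automatic because $p_0 \leq s(p)$ and the top coordinate is at least $s(p)$ on both sides. No further calculation is needed.
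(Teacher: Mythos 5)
Your proof is correct and follows essentially the same route as the paper's: reduce to density below $\ordered{s(p),1,1}$, observe that $s(p_0)$ and $s(p)$ are compatible, invoke \cref{ProductIsoRadin} to get isomorphic cones, and transport the forcing fact (ordinal parameters being fixed by the automorphism). No substantive differences.
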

\begin{proof}
We will prove a dense subset of conditions below $\ordered{s(p), 1, 1}$
	force $\gj(\ga_0, \dotsc, \ga_{l})$.
Assume $\ordered{p^0, \GN{q}^0, \GN{r}^0} \leq \ordered{s(p), 1, 1}$.
Trivially $s(p^0)$ and $s(p)$ are compatible, hence
	by the previous corollary there are
		stronger conditions
			$\ordered{p^{0*}, \GN{q}^{0*}, \GN{r}^{0*}}
					\leq \ordered{p^0, \GN{q}^0, \GN{r}^0}$ and
			$\ordered{p^{1*}, \GN{q}^{1*}, \GN{r}^{1*}}
			\leq
				\ordered{p, \GN{q}, \GN{r}}$ such that
				$\PP/\ordered{p^{0*}, \GN{q}^{0*}, \GN{r}^{0*}}
				 \simeq \PP/\ordered{p^{1*}, \GN{q}^{1*}, \GN{r}^{1*}}$.
			Necessarily	$\ordered{p^{0*}, \GN{q}^{0*}, \GN{r}^{0*}} \forces_{\PP}
				\gj(\ga_0, \dotsc, \ga_l)$.
\end{proof}
Letting $I$ be $R_\gk$-generic over $V[G][H]$ we get
	the following from the previous corollary together with
		\cref{HodIsInSmallExtension}.
\begin{corollary}
	$\HOD^{V[G][H][I]} \subseteq V[s''G]$.
\end{corollary}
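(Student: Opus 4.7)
The plan is to derive this as a direct application of \cref{HodIsInSmallExtension} to the three--step iteration $\PP = \PE * \GN{P}_\gk * \GN{R}_\gk$, with the projection
\[
	\gp \func \PP \to \PP, \qquad
	\gp\bigl(\ordered{p, \GN{q}, \GN{r}}\bigr) = \ordered{s(p),1,1}.
\]
First I would check that $\gp$ really is a projection of $\PP$ onto the subforcing $s''\PE * \{1\} * \{1\}$, which amounts to noting that $s \func \PE \to s''\PE$ is a projection (immediate from the definitions of $s$ on the successive factors of a Radin condition) and that dropping the collapse and coding coordinates to $1$ is a projection of the whole iteration. Next, I would verify that $\PP$ is ordinal definable, since it is built uniformly from the Mitchell-increasing sequence $\Vec{E}$, the Levy collapses $\Col(\gm_\ga, \lto \gk_{\ga+1})$, and the Cohen codings $\Chn_{\gc_\gb, \sA_\gb}$, all of which are determined once $\Vec{E}$ and the generic $f_G(\gk)$ are fixed (and the latter is a $\PE$-name, so $\PP$ itself is ordinal definable in $V$).

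The key forcing-theoretic hypothesis of \cref{HodIsInSmallExtension} is exactly the statement of the preceding corollary: for every formula $\gj$ and any ordinals $\ga_1, \dotsc, \ga_l$, if $\ordered{p, \GN{q}, \GN{r}} \forces_{\PP} \gj(\ga_1, \dotsc, \ga_l)$, then $\gp\ordered{p, \GN{q}, \GN{r}} = \ordered{s(p),1,1} \forces_{\PP} \gj(\ga_1, \dotsc, \ga_l)$. Combined with the cone homogeneity of $\PP$ (which follows from \cref{HomogeneousExtenderBasedRadin} for the $\PE$ factor and from \cref{Dobrinen} applied to the backward Easton iterations $P_\gk$ and $R_\gk$ of homogeneous Levy collapses and Cohen products), the hypotheses of \cref{HodIsInSmallExtension} are all satisfied.

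Applying \cref{HodIsInSmallExtension} then yields
\[
	\HOD^{V[G][H][I]} \;\subseteq\; V\bigl[\gp''(G*H*I)\bigr].
\]
Finally I would observe that $\gp''(G*H*I) = s''G$ (the collapse and coding generics are projected to the trivial condition, and the first coordinate projects exactly to the Radin generic $s''G$), which gives the desired inclusion $\HOD^{V[G][H][I]} \subseteq V[s''G]$. The only nontrivial bookkeeping is the verification that $\PP$ is cone homogeneous as a three-step iteration rather than in each factor separately, but this is handled coordinatewise by \cref{LiftAutomorphism} exactly as in \cref{ProductIsoRadin}, so no genuine obstacle arises.
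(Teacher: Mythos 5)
Your proposal is correct and follows essentially the same route as the paper: the paper also obtains this corollary by combining the preceding corollary (that $\ordered{p,\GN{q},\GN{r}}\forces_\PP\gj(\ga_1,\dotsc,\ga_l)$ implies $\ordered{s(p),1,1}\forces_\PP\gj(\ga_1,\dotsc,\ga_l)$) with \cref{HodIsInSmallExtension} applied to the projection $\ordered{p,\GN{q},\GN{r}}\mapsto\ordered{s(p),1,1}$. You merely spell out the bookkeeping (ordinal definability, cone homogeneity of the three-step iteration, and the identification $\gp''(G*H*I)=s''G$) that the paper leaves implicit.
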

\begin{claim}
	In $V_\gk^{V[G][H][I]}$ all regulars above $\gk_0$
		are
	$\HOD^{V_\gk^{{V[G][H][I]}}}$-measurable.
			\end{claim}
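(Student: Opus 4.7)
The plan is to lift each chosen $V$-measure $U_\ga$ to a measure on $\gm_\ga$ living in $\HOD^{V_\gk^{V[G][H][I]}}$. First I would verify that the regulars of $V_\gk^{V[G][H][I]}$ above $\gk_0$ are, modulo the collapse-induced reindexing, exactly the $\gm_\ga$'s: $\PE$ preserves cardinals below $\gk$ since it adds no bounded subsets of $\gk$; each $\Col(\gm_\ga,\lto\gk_{\ga+1})$ collapses $[\gm_\ga,\gk_{\ga+1})$ down to $\gm_\ga^+$; and the Easton-product Cohen coding $R_\gk$, with the $\gc_\gb$'s chosen in the correct gaps, preserves the resulting cardinal structure. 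So every regular $\gm>\gk_0$ in $V_\gk^{V[G][H][I]}$ equals $\gm_\ga$ for some $\ga$ with $\gk_\ga>\gk_0$.

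Fix such a $\gm_\ga$ and set
\begin{align*}
\bar{U}_\ga=\{B\in\HOD^{V_\gk^{V[G][H][I]}}\cap\Pset(\gm_\ga):\exists A\in U_\ga\ A\subseteq B\}.
\end{align*}
That $U_\ga$, and hence $\bar{U}_\ga$, lies in $\HOD^{V_\gk^{V[G][H][I]}}$ is the whole purpose of $R_\gk$: the Cohen coding at stage $\gb$ pushes every member of $\mathfrak{A}_\gb=\Pset(\gm_\gb^{++})^V$ into the continuum function via the jumps of $2^{\gc_\gb^{+\gx+1}}$, so every $V$-subset of $\gm_\gb^{++}$ becomes ordinal-definable inside $V_\gk^{V[G][H][I]}$. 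Since $U_\ga\in V$ is, under a fixed $V$-coding, a subset of $\gm_\ga^{++}$, it becomes ordinal-definable in $V_\gk^{V[G][H][I]}$, giving $U_\ga,\bar{U}_\ga\in\HOD^{V_\gk^{V[G][H][I]}}$.

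The crux is then to show $\bar{U}_\ga$ is a $\gm_\ga$-complete non-principal ultrafilter inside $\HOD^{V_\gk^{V[G][H][I]}}$, for which the key input is
\begin{align*}
\HOD^{V_\gk^{V[G][H][I]}}\cap\Pset(\gm_\ga)\subseteq V\cap\Pset(\gm_\ga).
\end{align*}
I would establish this by (a) localizing the cone-homogeneity argument of \cref{ProductIsoRadin} and the derivation via \cref{HodIsInSmallExtension} to obtain $\HOD^{V_\gk^{V[G][H][I]}}\subseteq V[s''G]\cap V_\gk^{V[G][H][I]}$, and (b) invoking the standard Prikry--Radin property that $s''\PE$, the ordinary Radin forcing from $\ordof{E_\gx(\gk)}{\gx<\mo(\Vec{E})}$, adds no bounded subsets of $\gk$. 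Granted both, $U_\ga$ measures every $B\in\HOD^{V_\gk^{V[G][H][I]}}\cap\Pset(\gm_\ga)$, so $\bar{U}_\ga$ is an ultrafilter; and any $\gd$-sequence ($\gd<\gm_\ga$) of $\bar{U}_\ga$-sets in $\HOD^{V_\gk^{V[G][H][I]}}$ codes to a subset of $\gm_\ga$, hence by the same inclusion lies in $V$, so its intersection is in $U_\ga$ by the $\gm_\ga$-completeness of $U_\ga$ in $V$.

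The main obstacle is (a), the $V_\gk$-relativization of the $\HOD$-inclusion: definability inside $V_\gk^{V[G][H][I]}$ is a priori different from external ordinal-definability intersected with $V_\gk^{V[G][H][I]}$, and the earlier proofs must be re-run with all formulas and parameters kept inside $V_\gk^{V[G][H][I]}$ for the projection argument to pass through the bounded definability. Item (b) is standard for extender-based Radin forcing at $\gk$ (\cite{Merimovich2011b,MerimovichSupercompactExtender}). Without either ingredient $\HOD^{V_\gk^{V[G][H][I]}}$ could contain subsets of $\gm_\ga$ outside $V$ that $U_\ga$ fails to measure, and both the ultrafilter property and $\gm_\ga$-completeness of $\bar{U}_\ga$ would be in jeopardy.
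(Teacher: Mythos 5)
Your outline matches the paper's up to the decisive step, but that step contains a genuine error. You reduce everything to the inclusion $\HOD^{V_\gk^{V[G][H][I]}}\cap\Pset(\gm_\ga)\subseteq V$, justified by your item (b): that $s''\PE$, the plain Radin forcing, adds no bounded subsets of $\gk$. That is false. Unlike Prikry forcing, Radin forcing with a measure sequence of nontrivial length adds a club $f_G(\gk)=\setof{\gk_\gb}{\gb<\gk}$ all of whose initial segments below limit points are new: for limit $\ga$, the set $\setof{\gk_\gb}{\gb<\ga}$ is a new cofinal subset of the $V$-inaccessible $\gk_\ga$, hence a new bounded subset of $\gk$ lying in $\Pset(\gm_\ga)$. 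Worse, this set actually \emph{is} in $\HOD^{V_\gk^{V[G][H][I]}}$ --- after the collapses the cardinals of $V_\gk^{V[G][H][I]}$ above $\gk_0$ are exactly the $\gm_\gb$'s, so $f_G(\gk)\cap\gk_\ga$ is recoverable from the cardinal structure of $V_\gk^{V[G][H][I]}$. So your key inclusion is not merely unjustified but false, and with it your argument that $\Bar{U}_\ga$ is a ($\gm_\ga$-complete) ultrafilter collapses: $U_\ga$, as a $V$-measure, does not measure these new sets.

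The repair is exactly the paper's route, and you already have the right ingredient in your item (a). Instead of pulling sets back into $V$, first lift $U_\ga$ to a measure $\Bar{\Bar{U}}_\ga=\setof{B\in V[s''G]}{\exists A\in U_\ga\ A\subseteq B\subseteq\gm_\ga}$ in the intermediate model $V[s''G]$ (the paper asserts this lift is trivial for the plain Radin forcing; this is where the factorization of $s''\PE$ at $\gm_\ga$ does real work). Then use $\HOD^{V_\gk^{V[G][H][I]}}\subseteq\HOD^{V[G][H][I]}\subseteq V[s''G]$ --- note the first inclusion disposes of your worry about relativizing to $V_\gk$, since $V_\gk^{V[G][H][I]}$ is itself ordinal definable in $V[G][H][I]$ --- to conclude $\Bar{U}_\ga\subseteq\Bar{\Bar{U}}_\ga$. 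Every $B\in\HOD^{V_\gk^{V[G][H][I]}}\cap\Pset(\gm_\ga)$ and every short sequence of such sets lies in $V[s''G]$ and is therefore measured by the genuine ultrafilter $\Bar{\Bar{U}}_\ga$, which gives the ultrafilter property and $\gm_\ga$-completeness of $\Bar{U}_\ga$ without ever needing the sets to lie in $V$. Your treatment of membership $U_\ga\in\HOD^{V_\gk^{V[G][H][I]}}$ via the coding $R_\gk$, and of the identification of the regulars with the $\gm_\ga$'s, agrees with the paper and is fine.
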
	
			\begin{proof}
Since the regulars in the range $[\gk_0, \gk)$ are
	$\setof {\gm_\ga}{\ga < \gk}$,
	we will be done by showing for each $\ga < \gk$ the
	$V$-measure $U_\ga$ lifts to a $\HOD^{V_\gk^{V[G][H][I]}}$-measure.
In $V$, $\gm_\ga$ is measurable.
The set $s''\PE$ is the plain Radin forcing, hence any measure in $V$
	over $\gm_\ga$
	lifts trivially to a measure on $\gm_\ga$
			in $V[s''G]$.
In particular the $V$-measure $U_\ga$
	lifts to the $V[s''G]$
		measure $\Bar{\Bar{U}}_\ga$,
	which is definable by
		$\Bar{\Bar{U}}_\ga = \setof{B \in V[s''G]}
				{\exists A \in U_\ga\ A \subseteq B\subseteq \gm_\ga}$.
				
Since $\HOD^{V_\gk^{V[G][H][I]}} \supseteq V_{(\gm_\ga^{++})_V}$ we get
	$U_\ga \in \HOD^{V_\gk^{V[G][H][I]}} \subseteq \HOD^{V[G][H][I]}
				 \subseteq V[s''G]$.
Let	$\Bar{U}_\ga = \setof{B \in \HOD^{V_\gk^{V[G][H][I]}}}
				{\exists A \in U_\ga\ A \subseteq B\subseteq \gm_\ga}$.
Then	 $\Bar{U}_\ga \in \HOD^{V_\gk^{V[G][H][I]}}$
	and $\Bar{U}_\ga \subseteq \Bar{\Bar{U}}_\ga$.
	Necessarily $\Bar{U}_\ga$	is a measure
	on $\gm_\ga$.
\end{proof}
We get \cref{TheBigPicture} by forcing in $V[G][H][I]$ with
	$\Col(\gw, \lto\gk_0)$.

%
%
%
%
%
%
%
%
%
%
%
%

%
%
%
%
\end{document}